\documentclass{amsart}
\setlength{\textheight}{43pc} \setlength{\textwidth}{28pc}
 \usepackage{amssymb, latexsym}
 \usepackage{url}

\DeclareMathOperator{\var}{\mathrm{Var}}
\DeclareMathOperator{\cov}{\mathrm{Cov}}

\begin{document}
 \bibliographystyle{plain}

 \newtheorem*{KLT}{The Khinchin-Levy Theorem}
 \newtheorem*{KT}{Khinchin's Theorem}
 \newtheorem{theorem}{Theorem}
 \newtheorem{lemma}{Lemma}
 \newtheorem{corollary}{Corollary}
 \newtheorem{conjecture}{Conjecture}
 \newtheorem{definition}{Definition}
 \newcommand{\mc}{\mathcal}
 \newcommand{\F}{\mc{F}}
 \newcommand{\FN}{\F_n}
 \newcommand{\fcap}{\F\cap (0,1)}
 \newcommand{\rar}{\rightarrow}
 \newcommand{\Rar}{\Rightarrow}
 \newcommand{\lar}{\leftarrow}
 \newcommand{\lrar}{\leftrightarrow}
 \newcommand{\Lrar}{\Leftrightarrow}
 \newcommand{\zpz}{\mathbb{Z}/p\mathbb{Z}}
 \newcommand{\mbb}{\mathbb}
 \newcommand{\A}{\mc{A}}
 \newcommand{\B}{\mc{B}}
 \newcommand{\I}{\mc{I}}
 \newcommand{\J}{\mc{J}}
 \newcommand{\D}{\mc{D}}
 \newcommand{\E}{\mc{E}}
 \newcommand{\U}{\mc{U}}
 \newcommand{\V}{\mc{V}}
 \newcommand{\W}{\mc{W}}
 \newcommand{\itQ}{\mc{Q}}
 \newcommand{\C}{\mathbb{C}}
 \newcommand{\R}{\mathbb{R}}
 \newcommand{\N}{\mathbb{N}}
 \newcommand{\Q}{\mathbb{Q}}
 \newcommand{\Z}{\mathbb{Z}}
 \newcommand{\fsum}{\sum_{\beta\in\F\cap [0,1)}}
 \newcommand{\fcapsum}{\sum_{\beta\in\fcap}}
 \newcommand{\qsum}{\sum_{q=1}^Q}
 \newcommand{\psum}{\sum_{\substack{p=1\\(p,q)=1}}^q}
 \newcommand{\asum}{\sum_{\substack{a=1\\(a,q)=1}}^q}
 \newcommand{\ponesum}{\sum_{\substack{p_1=1\\(p_1,q_1)=1}}^{q_1}}
 \newcommand{\ptwosum}{\sum_{\substack{p_2=1\\(p_2,q_2)=1}}^{q_2}}
 \newcommand{\fb}{f_{\beta}}
 \newcommand{\fg}{f_{\gamma}}
 \newcommand{\gb}{g_{\beta}}
 \newcommand{\nsb}{N_{\text{SB}}}
 \newcommand{\vphi}{\varphi}
 \newcommand{\Xqn}{X_{q,N}}
 \newcommand{\wh}{\widehat}
 \newcommand{\whXq}{\widehat{X}_q(0)}
 \newcommand{\whXqN}{\widehat{X}_{q,N}(0)}
 \newcommand{\whYq}{\widehat{Y}_q(0)}
 \newcommand{\Xnn}{g_{n,N}}
 \newcommand{\whXn}{\widehat{X}_{n,N}(0)}
 \newcommand{\aqb}{a_{q,\beta}}
 \newcommand{\lf}{\left\lfloor}
 \newcommand{\rf}{\right\rfloor}
 \newcommand{\lQx}{L_Q(x)}
 \newcommand{\lQQ}{\frac{\lQx}{Q}}
 \newcommand{\rQx}{R_Q(x)}
 \newcommand{\rQQ}{\frac{\rQx}{Q}}
 \newcommand{\elQ}{\ell_Q(\alpha )}
 \newcommand{\oa}{\overline{a}}

\title[Intermediate convergents and a metric theorem of Khinchin]{Intermediate convergents and\\ a metric theorem of Khinchin}
\author{Alan K. Haynes}
\subjclass[2000]{11B57, 11K50, 60G46}
\thanks{Research supported by EPSRC grant EP/F027028/1}
\keywords{Intermediate convergents, continued fractions,
Khinchin's theorem, metric number theory}
\address{Department of Mathematics, University of York, Heslington, York YO10 5DD, UK}
\email{akh502@york.ac.uk}
 \allowdisplaybreaks


\begin{abstract}
A landmark theorem in the metric theory of continued fractions
begins this way: Select a non-negative real function $f$ defined
on the positive integers and a real number $x$, and form the
partial sums $s_n$ of $f$ evaluated at the partial quotients
$a_1,\ldots ,a_n$ in the continued fraction expansion for $x$.
Does the sequence $\{s_n/n\}$ have a limit as $n\rar\infty$? In
1935 A.~Y.~Khinchin proved that the answer is yes for almost
every $x$, provided that the function $f$ does not grow too
quickly. In this paper we are going to explore a natural
reformulation of this problem in which the function $f$ is
defined on the rationals and the partial sums in question are
over the intermediate convergents to $x$ with denominators less
than a prescribed amount. By using some of Khinchin's ideas
together with more modern results we are able to provide a
quantitative asymptotic theorem analogous to the classical one
mentioned above.
\end{abstract}


\maketitle

\section{Definitions and Statement of Results}
For each real number $x$ we denote the simple continued fraction
expansion of $x$ by
\begin{align*} x=a_0 + \cfrac{1}{a_1+
            \cfrac{1}{a_2+\dotsb}}=[a_0;a_1,a_2,\ldots],
\end{align*}
where $a_0\in\Z$ and $a_n\in\N$ for each $n\ge 1$. The integers
$a_n, n\ge 0$ are the {\it partial quotients} in the continued
fraction expansion of $x$. If $x$ is irrational then this
expansion is unique. If $x\in\Q\setminus\Z$ then there are only
finitely many nonzero partial quotients in its continued fraction
expansion, and we ensure that the expansion is unique by
requiring that the last nonzero partial quotient be greater than
one. Finally if $x\in\R/\Z$ then we ensure that its continued
fraction expansion is unique by requiring that $a_0=0$.

In 1935 A.~Y.~Khinchin published a proof of the following result
\cite{khinchin1935}, \cite[Theorem 35]{khinchin1964}.
\begin{KT}
Suppose that $f(r)$ is a non-negative function of a natural
argument $r$ and suppose that there exist positive constants $C$
and $\delta$ such that
\[f(r)<Cr^{1/2-\delta}\qquad
(r=1,2,\ldots).\]
Then, for all numbers in the interval $(0,1)$,
with the exception of a set of measure zero,
\[\lim_{n\rar\infty}\frac{1}{n}\sum_{k=1}^nf(a_k)=\sum_{r=1}^\infty f(r)\frac{\log\left(1+\frac{1}{r(r+2)}\right)}{\log 2}.\]
\end{KT}

In this paper we wish to formulate a natural variant of this
result, which we present as Theorem \ref{MQestimate2} below. Our
variant can be seen as a refinement of Khinchin's Theorem and it
also raises several interesting and apparently difficult open
questions, one of which we discuss at the end of this section.

To this end we introduce for $x\in\R$ and $n\ge 0$ the $n$th {\it
principal convergent} to $x$
\[\frac{p_n}{q_n}=[a_0;a_1\ldots a_n],\]
which we will always assume is written in lowest terms. For
$n=-2,-1$ we also define $p_{-2}=q_{-1}=1$ and $p_{-1}=q_{-2}=0$.
For $n\ge 1$ we define the subset $E_n=E_n(x)$ of $\Q$ by
\begin{align*}
E_n &= \left\{\frac{mp_{n-1} + p_{n-2}}{mq_{n-1} + q_{n-2}}: m =
1,
2, \dots , a_n\right\}\\
&= \big\{[a_0; a_1, a_2, \dots , a_{n-1}, m]: m = 1, 2, \dots,
a_n\big\}.
\end{align*}
Each set $E_n$ contains $a_n$ distinct fractions, including the
principal convergent $p_n/q_n$.  The remaining fractions (if any)
indexed by $m = 1, 2, \dots , a_n-1$ are the {\it intermediate
convergents} to $x$. We denote the union of the sets $E_n(x),
n=1,2,\ldots ,$ by $\E=\E(x)$. Finally if $\beta=a/q\in\Q$ and
$(a,q)=1$ then we define the {\it height} of $\beta$ as
$h(\beta)=|q|$. Our main result is
\begin{theorem}\label{MQestimate2}
Let $g$ be a non-negative arithmetical function which satisfies
$g(m)=O(m^{-(1/2+\gamma)})$ for some $\gamma>0$. For each
$\beta\in\Q$ let $\beta=[a_0;a_1,\ldots,a_L]$ and define
$c(\beta)=g(a_L)$. Then for any $\epsilon>0$ and for almost all
$x\in\R$ we have as $Q\rar\infty$ that
\begin{align*}
\sum_{\substack{\beta\in\E(x) \\
h(\beta)\le Q}}c(\beta)=~&\frac{12}{\pi^2}\left(\sum_{m=1}^
\infty g(m)\log\left(1+\frac{1}{m}\right)\right)\log Q\nonumber\\
&+O_\epsilon\left((\log Q)^{1-\gamma/2}(\log\log
Q)^{5/8+\epsilon-\gamma^2}\right).
\end{align*}
\end{theorem}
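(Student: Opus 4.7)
The plan is to reduce $S(Q,x) := \sum_{\beta \in \E(x),\, h(\beta) \le Q} c(\beta)$ to a double sum over the partial quotients of $x$, extract the main term via Birkhoff's ergodic theorem for the Gauss map, and control the error with a quantitative metric theorem of G\'al--Koksma type.

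I would start by using the bijection $\beta \leftrightarrow (n,m)$ between $\E(x)$ and pairs with $n \ge 1$, $1 \le m \le a_n$, where $\beta = (mp_{n-1}+p_{n-2})/(mq_{n-1}+q_{n-2})$, so $h(\beta) = mq_{n-1}+q_{n-2}$ and $c(\beta) = g(m)$. Let $N = N(Q,x)$ be the largest index with $q_N \le Q$; then for $n \le N$ the height constraint is automatic (since $h(\beta) \le q_n \le Q$), whence
\[
S(Q,x) = \sum_{n \le N} F(T^{n-1}\tilde x) + (\text{partial boundary sum at level } n = N+1),
\]
with $F(y) := \sum_{m=1}^{a_1(y)} g(m)$, $T$ the Gauss map on $[0,1]$, and $\tilde x$ the fractional part of $x$.

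For the main term, L\'evy's theorem gives $N(Q,x) \sim (12\log 2/\pi^2)\log Q$ almost surely; under the Gauss measure $d\mu = dy/((1+y)\log 2)$, Birkhoff's ergodic theorem yields $\frac{1}{N}\sum_{n \le N} F(T^{n-1}\tilde x) \to \int F\,d\mu$ almost surely. A routine Fubini computation gives $\int F\,d\mu = \sum_m g(m)\log(1+1/m)/\log 2$, producing the claimed main term $\frac{12}{\pi^2}\bigl(\sum_m g(m)\log(1+1/m)\bigr)\log Q$. The error then decomposes into the Birkhoff-sum discrepancy $\sum_{n \le N}(F(T^{n-1}\tilde x) - \int F\,d\mu)$, the partial boundary contribution at level $N+1$, and the fluctuation of $N(Q,x)$. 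The discrepancy term I would bound by a G\'al--Koksma-type inequality for the Gauss shift, which requires $L^2$ control on $F$ (and on its truncations).

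The principal difficulty is obtaining the sharp error exponent $(\log Q)^{1-\gamma/2}(\log\log Q)^{5/8+\epsilon-\gamma^2}$. The leading power $1-\gamma/2$ should emerge from an optimal truncation of $g$ at a scale $M = M(Q,\gamma)$: the $L^2$ norm of the truncated $F$ grows with $M$, while the tail $\sum_{m>M} g(m) = O(M^{1/2-\gamma})$ shrinks, and balancing these fixes $M$ as a suitable fractional power of $\log Q$. The unusual iterated-log exponent $5/8$ together with the $\gamma^2$ correction should come from the sharpest G\'al--Koksma bound available in this truncated regime, requiring careful tracking of how all variance constants depend on $M$ (and hence on $\gamma$). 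The partial boundary contribution at level $N+1$ adds further complication: its size depends on how $q_{N+1}$ and $q_N$ sit relative to $Q$, an equidistribution-type question that feeds back into the same metric machinery.
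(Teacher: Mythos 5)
Your top-level reduction is the same as the paper's (parametrize $E_n(x)$ by pairs $(n,m)$, note the height constraint is automatic for $n<N$, extract the main term from the Gauss/partial-quotient statistics, use L\'evy's theorem for $N(Q,x)$), but two points in it are already shaky. First, the identification $c(\beta)=g(m)$ fails for $m=1$: with the paper's convention that the last partial quotient in the canonical expansion exceeds one, the $m=1$ member of $E_n(x)$ is $[a_0;a_1,\ldots,a_{n-1}+1]$, so its weight is $g(a_{n-1}+1)$, not $g(1)$. These terms contribute at exact main-term order $\asymp N\asymp\log Q$, so they cannot be absorbed into the error; they must be treated as a separate sum $\sum_{n\le N}g(a_{n-1}+1)$ (of the same mixing type), as the paper does explicitly when it derives its formula for $M_Q(x)$, and in general they affect the leading coefficient, so your $F$ needs modifying. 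Second, the boundary level needs no ``equidistribution machinery'': the partial block at level $N+1$ is $\ll a_{N+1}^{1/2-\gamma}$ and is disposed of by the almost-everywhere bound $a_n\le n(\log n)^{1+\delta}$ for all large $n$, giving $O((\log Q)^{1/2-\gamma+\epsilon})$, exactly as in the paper's error term (\ref{MQest1err2}).

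The genuine gap is the quantitative error estimate, which you delegate entirely to an unspecified ``G\'al--Koksma-type inequality'' for truncations of $F$ together with an optimal truncation at a scale $M$ that is ``a suitable fractional power of $\log Q$''. That is not how the stated exponents arise, and as sketched it leaves the two hard points open. (i) Almost-everywhere estimates for individual truncations cannot simply be summed or optimized over a $Q$-dependent cutoff, because the constants produced by the Borel--Cantelli step are not uniform in the truncation; the paper's remarks after Lemma \ref{AEest1} make exactly this point, and its Lemmas \ref{varest2}, \ref{varest3} and \ref{aelemma1} exist precisely to prove one estimate uniform in an $n$-dependent cutoff $f(n)$, at the cost of the exponent $N^{3/4}$ (rather than $N^{1/2}$) coming from the Chebyshev--Borel--Cantelli argument along $n_i=i^2$. (ii) The cutoff is not a free parameter to balance: it is forced to be $f(N)=\lfloor N(\log N)^{1/2+\delta}\rfloor$ by the Diamond--Vaaler refinement (Lemma \ref{anmaxlem}: a.e.\ at most one index $n\le N$ with $a_n>N(\log N)^{1/2+\delta}$), the single exceptional partial quotient being controlled by $a_n\ll n(\log n)^{1+\delta}$. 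If you truncate at a fractional power of $\log Q$, then almost surely unboundedly many levels $n\le N$ have $a_n$ above the cutoff, and you give no mechanism to control their total contribution almost everywhere (its expectation is small, but its a.e.\ size is governed by the largest partial quotients, which is precisely the difficulty). The exponents in the theorem then come from assembling these fixed ingredients with $\delta=\gamma$: the error is essentially $\bigl(\sum_{m\le f(N)}g(m)\bigr)^{1/2}(\log Q)^{3/4}(\log\log Q)^{1/2+\epsilon}\ll(\log Q)^{3/4+(1/2-\gamma)/2}(\log\log Q)^{\cdots}=(\log Q)^{1-\gamma/2}(\log\log Q)^{\cdots}$, not the outcome of a variance/tail balance; no argument in your sketch produces the $(\log\log Q)$ power either. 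So while the skeleton matches the paper, the core of the proof --- a quantitative a.e.\ theorem uniform in the truncation, plus the Diamond--Vaaler/Borel--Bernstein control of large partial quotients --- is missing.
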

Since the intermediate convergents to a real number interpolate
its partial quotients, the function $g$ that we are using here
can be thought of as an average value of the function $f$ which
appears in Khinchin's Theorem. Also the number of terms in our
sum is limited not by a fixed quantity $n$, as in Khinchin's, but
rather by the number of intermediate convergents to each point
$x$ which have small height.

At this point we make a few remarks. First of all we are actually
going to prove something more general than Theorem
\ref{MQestimate2}, namely Theorem \ref{MQestimate1} below, and
then deduce the former as a special case. Secondly since there is
obviously room for more flexibility in the constants $c(\beta)$
than we are allowing in our theorems, we are going to set the
problem up with this in mind before we specialize to the case at
hand. Finally approximation of real numbers by rationals is
essentially unique modulo the integers. For this reason (and also
to be in sympathy with the notation used in \cite{haynes2006})
from now on we prefer to formulate everything with respect to
$\R/\Z$ and $\Q/\Z$ instead of with respect to $\R$ and $\Q$.

Now let us proceed to develop in more detail the results which
are contained in this paper. For each $Q\in\N$ define the Farey
fractions of order $Q$ by
$$\F_Q = \{a/q\in \Q/\Z:1\le q\le Q, \gcd(a,q)=1\},$$
and let $\F$ denote the set of Farey fractions of all orders. For
each $\beta\in\F$ with $h(\beta)\ge 2$ there exist unique
elements $\beta'$ and $\beta''$ in $\F$ with the property that
$\beta'<\beta<\beta''$ are consecutive in the Farey fractions of
order $h(\beta)$. Thus for each $\beta\in\F$ with $h(\beta)\ge 2$
we may define a function $\chi_\beta:\R/\Z\rar\R$ by
\begin{align*}
\chi_\beta(x) & = \begin{cases}1 &\text{ if $x\in (\beta',\beta'')$},\\
    \frac{1}{2} &\text{ if $x=\beta'$ or $x=\beta''$, and}\\
    0 &\text{ if $x\notin [\beta',\beta'']$}.
\end{cases}
\end{align*}
For $\beta=0$ we define the function $\chi_\beta:\R/\Z\rar\R$ by
$\chi_\beta(x)=1.$ In \cite[Theorem 5]{haynes2006} it is shown
that if $x\in(\R/\Z)\setminus\Q$ and $\beta\in\F$ then
\begin{equation}\label{chibeta1}
\chi_\beta(x)=\begin{cases}1 &\text{ if $\beta\in E_n(x)$ for
some $n$, and}\\ 0&\text{otherwise.}
\end{cases}
\end{equation}
In other words $\chi_\beta$ is the indicator function of the
event that an irrational point $x$ has $\beta$ as one of its
intermediate or principal convergents. Now given any sequence of
real constants $\{c(\beta)\}_{\beta\in\F}$ and any positive
integer $Q$ it is natural for our purposes to consider the
function $M_Q:\R/\Z\rar\R$ defined by
\begin{equation}\label{MQeqn1}
M_Q(x)=\sum_{\beta\in\F_Q}c(\beta)\chi_\beta(x).
\end{equation}
By (\ref{chibeta1}) it is apparent that for irrational
$x\in\R/\Z$,
\begin{equation}\label{MQeqn2.1}
M_Q(x)=\sum_{\substack{\beta\in \E(x)\\
h(\beta)\le Q}}c(\beta).
\end{equation}
For example if we set $c(\beta)=1$ for all $\beta$ then we have
for irrational $x$ that
\[M_Q(x)=\#\{\beta\in\F_Q:\beta\in E_n(x)\text{ for some } n\}.\]
It is tempting to try to obtain an almost everywhere asymptotic
formula for $M_Q(x)$ with this choice of constants, but we will
see that in its simplest form this goal is unattainable.

To make this statement more precise, for each point $x\in\R/\Z$
and for each positive integer $Q$ we define the integer $N(Q,x)$
by
\begin{equation*}
N(Q,x)=\min\{n\ge 0 : Q< q_n+q_{n-1}\}.
\end{equation*}
We also let $a(Q,x)$ be the unique integer with the property that
\[a(Q,x)q_{N(Q,x)-1}+q_{N(Q,x)-2}\le Q<(a(Q,x)+1)q_{N(Q,x)-1}+q_{N(Q,x)-2}.\]
Note that by the definition of $N(Q,x)$ we have that $1\le
a(Q,x)\le a_{N(Q,x)}$. Furthermore in this notation we can
rewrite equation (\ref{MQeqn2.1}) as
\begin{equation}\label{MQeqn2}
M_Q(x)=\sum_{n=1}^{N(Q,x)}\sum_{\substack{\beta\in E_n(x) \\
h(\beta)\le Q}}c(\beta).
\end{equation}
In Section \ref{prelimsection} we will show how well known
results about continued fractions can be used to prove the
following result.
\begin{theorem}\label{NQestimate1}
For any $\epsilon>0$ and for almost every $x\in\R/\Z$ we have as
$Q\rar\infty$ that
\begin{equation*}
N(Q,x)=\frac{12\log 2}{\pi^2}\log Q+O_\epsilon\left((\log
Q)^{1/2}(\log\log Q)^{3/2+\epsilon}\right).
\end{equation*}
Consequently for almost every $x\in\R/\Z$ the number of
intermediate convergents to $x$ with height less than or equal to
$Q$ is asymptotic as $Q\rar\infty$ to
\begin{equation}\label{intconvseqn1}
\frac{12}{\pi^2}\log Q(\log\log
Q)\left(1+o(1)\right)+O(M_0(Q,x)),
\end{equation}
where $M_0(Q,x)=\max\{a_1(x),a_2(x),\ldots ,a_{N-1}(x),a(Q,x)\}$.
\end{theorem}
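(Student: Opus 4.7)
The plan is to reduce both assertions to classical metric results on the denominators $q_n$ and partial quotients $a_n$ of the continued fraction expansion of $x$, combined with the sandwich relation
\[
q_{N-1}+q_{N-2}\le Q<q_N+q_{N-1},
\]
which is immediate from the definition of $N=N(Q,x)$ and yields $\log q_{N-1}\le\log Q\le\log q_N+\log 2$.

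For the first assertion I would invoke a quantitative refinement of the Khinchin--L\'evy theorem. Writing $\log q_n$ as a Birkhoff sum for the Gauss map and using the spectral gap of its transfer operator, a Gal--Koksma or Menshov--Rademacher type argument for sums of $L^2$-orthogonal increments gives
\[
\log q_n=\frac{\pi^2}{12\log 2}\,n+O_\epsilon\bigl(n^{1/2}(\log n)^{3/2+\epsilon}\bigr)\quad\text{for a.e.\ }x,
\]
the $(\log n)^{3/2+\epsilon}$ factor being characteristic of the Gal--Koksma inequality. Inserting this into the sandwich bound and solving for $N$, then using $n=N\sim (12\log 2/\pi^2)\log Q$ to re-express the error, converts it into $(\log Q)^{1/2}(\log\log Q)^{3/2+\epsilon}$, which is the stated asymptotic for $N(Q,x)$.

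For the second assertion I would count intermediate convergents one level at a time. For $n\le N-1$ we have $q_n\le q_{N-1}\le Q$, so all $a_n-1$ intermediate convergents in $E_n$ contribute. In $E_N$ the heights are $mq_{N-1}+q_{N-2}$, and by definition of $a(Q,x)$ only those with $m\le a(Q,x)$ are admissible, giving $O(M_0(Q,x))$ additional terms. Hence the total count equals
\[
\sum_{n=1}^{N-1}a_n-(N-1)+O(M_0(Q,x)).
\]
The classical a.e.\ estimate of Diamond--Vaaler, refining Khinchin's in-measure theorem for $\sum a_k$,
\[
\sum_{k=1}^{n}a_k-\max_{1\le k\le n}a_k=(1+o(1))\,\frac{n\log n}{\log 2}\quad\text{for a.e.\ }x,
\]
handles the pathology caused by atypically large partial quotients. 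Combining it with the asymptotic $N\sim (12\log 2/\pi^2)\log Q$ from the first part, so that $N\log N\sim(12\log 2/\pi^2)(\log Q)(\log\log Q)$, yields formula \eqref{intconvseqn1}; the subtracted $\max_{1\le k\le N-1}a_k$ is absorbed into the $O(M_0(Q,x))$ term, and the $-(N-1)=O(\log Q)$ term is swallowed by the main asymptotic $(12/\pi^2)(\log Q)(\log\log Q)$.

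The main obstacle is locating a quantitative Khinchin--L\'evy statement with exactly the $(\log n)^{3/2+\epsilon}$ error. This should be within reach of a direct Gal--Koksma argument using $L^2$-variance bounds derived from the exponential decay of correlations for the Gauss map, but pinning down the increment variances and verifying that they satisfy the Gal--Koksma hypothesis is the delicate technical step. Once that is in hand, the counting in the second part and the invocation of Diamond--Vaaler are routine bookkeeping.
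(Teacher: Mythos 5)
Your proposal is correct and takes essentially the same route as the paper: the sandwich $q_{N-1}\le Q< q_N+q_{N-1}$ combined with the quantitative Khinchin--L\'evy theorem for the first assertion, and the Diamond--Vaaler estimate applied with $n=N(Q,x)-1$ for the second. The only differences are bookkeeping: the quantitative Khinchin--L\'evy bound with error $O_\epsilon\left(n^{1/2}(\log n)^{3/2+\epsilon}\right)$, which you flag as the delicate step, is simply quoted in the paper (Rockett--Sz\"usz, Theorem V.9.1, combined with Harman's Lemma 1.5) rather than rederived via Gal--Koksma, and the paper controls the upper side of the sandwich by writing $\log Q=\log q_{N-1}+O(\log N)$ using $a_n\le n^2$ a.e.\ eventually, whereas you apply Khinchin--L\'evy at index $N$ as well, which works equally well.
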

Note that for almost every $x\in\R/\Z$ the quantity $M_0(Q,x)$ is
infinitely often larger than $f(N(Q,x))$ for any function $f$
which satisfies
\[\sum_{n=1}^\infty f(n)^{-1}=\infty.\]
Thus the quantity (\ref{intconvseqn1}) is heavily influenced by
large partial quotients. It is also interesting to compare this
estimate with the expected value for $M_Q(x)$. We leave it to the
reader to verify that for $\beta=a/q\in\F$, $q\ge 2$ we have that
\begin{equation}\label{Echibeta}
E(\chi_\beta)=\frac{1}{h(\beta')h(\beta'')}.
\end{equation}
It should be understood that here and throughout the paper we are
using Lebesgue measure on $\R/\Z$. Using (\ref{Echibeta}) we find
(see \cite[Lemma 6]{haynes2006}) that for $q\ge 2$
\begin{equation}
E\left(\sum_{h(\beta)=q}\chi_\beta\right)= \frac{2
\vphi(q)}{q^2}\left\{\log q + \sum_{p|q} \frac{\log p}{p-1}
    + c_0\right\} + O\left(\frac{\log\log q}{q^2}\right),\label{exsubq2}
\end{equation}
where $c_0$ is Euler's constant, and the sum on the right of
(\ref{exsubq2}) is over prime numbers $p$ that divide $q$. From
this it is not difficult to deduce that the expected value for
the number of intermediate convergents to a real number which
have height $\le Q$ is
\begin{equation*}
\frac{6}{\pi^2}(\log Q)^2+O(\log Q(\log\log Q)).
\end{equation*}
Thus Theorem \ref{NQestimate1} tells us that the almost
everywhere asymptotic behavior of this quantity is significantly
smaller than what one would expect. The interested reader can
compare these observations with the discrete version of this
problem which is treated in \cite{yao1975} and in the last
chapter of \cite{khinchin1964}.

Theorem \ref{NQestimate1} also plays a role in our proof of
Theorem \ref{MQestimate2}, which we will attend to in Section
\ref{MQestsection}. There we will demonstrate how the weak
dependence of partial quotients can be used to prove the
following more general theorem, from which Theorem
\ref{MQestimate2} will be deduced.
\begin{theorem}\label{MQestimate1}
Let $\delta,\epsilon >0$, let $f(n)=\lfloor n(\log
n)^{1/2+\delta}\rfloor$, and let $g:\N\rar\R$ be any non-negative
arithmetical function which satisfies
\begin{align}
\sum_{m=1}^\infty\frac{g(m)}{m}<\infty&~\text{ and}\label{ghypoth1}\\
\limsup_{n\rar\infty}\frac{\sum_{m=1}^{f((n+1)^2)}g(m)}{\sum_{m=1}^{f(n^2)}g(m)}&<\infty.\label{ghypoth2}
\end{align}
For each
$\beta\in\F$ let $\beta=[a_0;a_1,\ldots,a_L]$ and define
$c(\beta)=g(a_L)$. Then for almost every $x\in\R/\Z$ we have as $Q\rar\infty$ that
\begin{align}
M_Q(x)=&\frac{12}{\pi^2}\left(\sum_{m=2}^{f(N)}g(m)\log\left(1+\frac{1}{m}\right)\right)\log Q\label{MQest1main}\\
&+O_\epsilon\left(\left(\sum_{m=1}^{f(N)}g(m)\right)^{1/2}(\log Q)^{3/4}(\log\log Q)^{1/2+\epsilon}\right)\label{MQest1err1}\\
&+O\left(\sum_{m=1}^{N(\log N)^{1+\delta}}g(m)\right),\label{MQest1err2}
\end{align}
where $f(N)=f(N(Q,x))$.
\end{theorem}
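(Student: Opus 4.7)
The plan is to unfold (\ref{MQeqn2}) into sums over partial quotients, truncate via Borel--Cantelli, apply a quantitative strong law of large numbers, and substitute the asymptotic for $N(Q,x)$ from Theorem \ref{NQestimate1}. For $\beta=[a_0;a_1,\ldots,a_{n-1},m]\in E_n(x)$ with $2\le m\le a_n$ the representation is already in standard form, so $c(\beta)=g(m)$, while the case $m=1$ collapses to $[a_0;a_1,\ldots,a_{n-1}+1]$ and contributes $g(a_{n-1}+1)$. Writing $N:=N(Q,x)$, equation (\ref{MQeqn2}) therefore gives
$$M_Q(x)=\sum_{n=1}^{N-1}\sum_{m=2}^{a_n}g(m)+\sum_{m=2}^{a(Q,x)}g(m)+\sum_{n=1}^N g(a_{n-1}+1).$$

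Since $\sum_n 1/f(n^2)<\infty$, a Borel--Cantelli argument applied to the Gauss--Kuzmin distribution of partial quotients shows that almost surely $a_n\le f(n^2)$ for all large $n$; combined with hypothesis (\ref{ghypoth2}), this allows us to truncate the inner sum at $f(N)$ at a cost absorbed into (\ref{MQest1err2}). Writing $\Phi_m(N):=\#\{1\le n\le N:a_n\ge m\}$ and interchanging summation yields
$$\sum_{n=1}^{N}\sum_{m=2}^{a_n\wedge f(N)}g(m)=\sum_{m=2}^{f(N)}g(m)\,\Phi_m(N).$$
The heart of the argument is a quantitative strong law for $\Phi_m(N)$: using the exponential mixing of the Gauss map together with the second-moment estimates deployed in \cite{haynes2006} for the indicators $\chi_\beta$ (cf.~(\ref{Echibeta})--(\ref{exsubq2})), one shows that almost surely
$$\Phi_m(N)=\frac{\log(1+1/m)}{\log 2}\,N+R_m(N),$$
where $R_m(N)$ admits a variance bound uniform in $m$ (note that $\sum_{a\ge m}\log(1+1/(a(a+2)))=\log(1+1/m)$ telescopes, matching the predicted mean). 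Aggregating against $g(m)$ by a Gál--Koksma / Cauchy--Schwarz argument produces an error of shape $\left(\sum_{m\le f(N)}g(m)\right)^{1/2}N^{3/4}(\log\log N)^{1/2+\epsilon}$, and the auxiliary sum $\sum_n g(a_{n-1}+1)$ is controlled by the same ergodic input and dominated by these error terms.

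Finally, Theorem \ref{NQestimate1} converts $N/\log 2$ into $(12/\pi^2)\log Q$; the fluctuation it contributes is multiplied by the bounded quantity $\sum_m g(m)\log(1+1/m)\ll\sum_m g(m)/m$, finite by (\ref{ghypoth1}), and so is absorbed into (\ref{MQest1err1}). The main obstacle will be obtaining the quantitative strong law for $\Phi_m(N)$ uniformly in $m\le f(N)$ with the precise error aggregation needed to produce the exponents $3/4$ and $1/2+\epsilon$ on $\log Q$ and $\log\log Q$ in (\ref{MQest1err1}). This demands a careful joint second-moment analysis of the correlated family $\{\Phi_m(N)\}_m$, exploiting both the decay of correlations for iterates of the Gauss map and the explicit expectation formulae available for the $\chi_\beta$.
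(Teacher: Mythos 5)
Your opening reduction of $M_Q(x)$ via (\ref{MQeqn2}) into $\sum_{n=1}^{N-1}\sum_{m=2}^{a_n}g(m)+\sum_{m=2}^{a(Q,x)}g(m)+\sum_{n=1}^{N}g(a_{n-1}+1)$ agrees with the paper's first step, but after that there are genuine gaps. First, the truncation. The almost sure bound $a_n\le f(n^2)$ for large $n$ does not justify cutting the inner sums at $f(N)=\lfloor N(\log N)^{1/2+\delta}\rfloor$ with a loss absorbed into (\ref{MQest1err2}): it neither limits the \emph{number} of indices $n\le N$ with $a_n>f(N)$ (it allows $a_n$ as large as roughly $f(N^2)\approx N^2(\log N)^{1/2+\delta}$ for many $n$ comparable to $N$), nor does it bound a single exceedance by $N(\log N)^{1+\delta}$; so for a general $g$ satisfying only (\ref{ghypoth1})--(\ref{ghypoth2}) the discarded mass need not be $O\bigl(\sum_{m\le N(\log N)^{1+\delta}}g(m)\bigr)$. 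The paper's proof requires the sharper Diamond--Vaaler input (Lemma \ref{anmaxlem}): almost surely, for all large $N$ at most \emph{one} index $n\le N$ has $a_n>N(\log N)^{1/2+\delta}$, and that single exceptional partial quotient is at most $n(\log n)^{1+\delta}$ eventually; this is exactly what legitimizes the truncation level $f(N)$ and produces (\ref{MQest1err2}). Your sketch has no substitute for this.

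Second, the heart of the proof is deferred rather than supplied: you yourself flag the ``quantitative strong law for $\Phi_m(N)$ uniformly in $m\le f(N)$'' as the main obstacle. The paper explicitly warns (after Lemma \ref{AEest1}) that summing per-$m$ almost-everywhere estimates fails, because each such estimate comes from Borel--Cantelli and its implied constant is not uniform in $m$, while the range $m\le f(N)$ grows with $N$. Its actual mechanism is to treat the weighted aggregate $\sum_{m\le f(n)}g(m)\sum_{i\le n}f_{m,i}$ from the outset, prove variance and covariance bounds for it (Lemmas \ref{varest2} and \ref{varest3}, resting on the correlation decay of partial quotients, Theorem \ref{weakdepthm} --- not on the $\chi_\beta$ second-moment formulas (\ref{Echibeta})--(\ref{exsubq2}) you invoke, which are not the relevant input), and then run Chebyshev/Borel--Cantelli along the subsequence $n_i=i^2$ with gap control (Lemma \ref{aelemma1}); hypothesis (\ref{ghypoth2}) enters precisely there through $G_f$. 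Finally, the term $\sum_{n\le N}g(a_{n-1}+1)$ is \emph{not} ``dominated by these error terms'': if $g(k)>0$ for some $k\ge 2$, then already the indices with $a_{n-1}=k-1$ contribute $\gg N\asymp\log Q$ almost surely, whereas for summable $g$ the error (\ref{MQest1err1}) is only $(\log Q)^{3/4+o(1)}$ and (\ref{MQest1err2}) is $O(1)$. The paper does not discard this term; it folds it into the main sum by the reindexing $g(a_{n-1}+1)+\sum_{m=2}^{a_{n-1}}g(m)=\sum_{m=2}^{a_{n-1}+1}g(m)$, i.e.\ by shifting $a_n$ to $a_n+1$, so nothing of order $\log Q$ is thrown away.
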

Finally we mention here an open problem, also discussed in
\cite{haynes2006}, which has been a motivation for much of this
research. Suppose that $\mc{Q}$ is a collection of positive
integers with the property that
\begin{equation}\label{divcond1}
\sum_{q\in\mc{Q}}\frac{\varphi(q)\log q}{q^2}=+\infty.
\end{equation}
Does it follow that almost every $x\in\R/\Z$ has infinitely many
intermediate convergents $\beta$ with $h(\beta)\in\mc{Q}$? In
other words, does it follow from (\ref{divcond1}) that
\begin{equation}\label{MQdiv1}
\sum_{\substack{\beta\in\F\\h(\beta)\in\mc{Q}}}\chi_\beta(x)=+\infty
\end{equation}
for almost every $x\in\R/\Z$? Condition (\ref{divcond1}) is
easily seen to be necessary in order for (\ref{MQdiv1}) to hold
(see the paragraph preceding \cite[Conjecture 1]{haynes2006}),
but sufficiency seems much more difficult to establish. The
analogous conjecture regarding the principal convergents to
almost all real numbers is a nontrivial theorem of Erd\"os
\cite{erdos1970}. The problem that we are proposing here appears
to share some characteristics with the unknown cases of the
Duffin-Schaeffer Conjecture, a survey of which can be found in
\cite{harman1998}.


\section{Proof of Theorem \ref{NQestimate1}}\label{prelimsection}
We will use the following theorem, which was proved in its
asymptotic form by both Khinchin and Levy in the 1930's.
\begin{KLT}\label{khinlevythm1}
For any $\epsilon>0$ and for almost every $x\in\R/\Z$ we have as
$n\rar\infty$ that
\begin{equation}\label{khinlevythm2}
\log q_n=\frac{\pi^2}{12\log 2}n+O_\epsilon\left(n^{1/2}(\log
n)^{3/2+\epsilon}\right).
\end{equation}
\end{KLT}
The constant $\pi^2/12\log 2$ is the Khinchin-Levy constant, and
will heretofore be referred to as $\gamma_{KL}$. The ingredients
of the proof of this result can be found in Chapter V of
\cite{rockett1992} although we should point out that there is a
typo which persists in the statements of several theorems in that
chapter, including the Khinchin-Levy Theorem. The error term that
we are reporting here can be obtained by combining the proof of
\cite[Theorem V.9.1]{rockett1992} with \cite[Lemma
1.5]{harman1998}.

By combining the Khinchin-Levy Theorem with a couple other
ingredients we are led to the following proof of Theorem
\ref{NQestimate1}.
\begin{proof}[Proof of Theorem \ref{NQestimate1}]
For any positive integer $Q$ and for any irrational $x\in\R/\Z$ we
have that
\[q_{N(Q,x)-1}+q_{N(Q,x)-2}\le Q < q_{N(Q,x)}+q_{N(Q,x)-1},\]
which implies that
\[q_{N(Q,x)-1}\le Q < (a_{N(Q,x)}+1)q_{N(Q,x)-1}+q_{N(Q,x)-2}.\]
Taking the logarithm then yields
\[\log(q_{N(Q,x)-1})\le\log Q<\log(q_{N(Q,x)-1})+\log(a_{N(Q,x)}+1)+\log 2.\]
Now we use the fact that the set \[\{x\in\R/\Z : a_n>n^2\text{
for infinitely many }n\}\] has measure zero \cite[Theorem
V.2.2]{rockett1992}. This means that for almost every $x\in\R/\Z$ we have
\[\log Q=\log q_{N(Q,x)-1}+O(\log N(Q,x)).\]
Since $N(Q,x)=O(\log Q)$ almost everywhere, the proof of the
first part of Theorem \ref{NQestimate1} now follows from the
Khinchin-Levy Theorem.

The second part of Theorem \ref{NQestimate1} follows from the
first part together with a result of H. Diamond and J. Vaaler,
\cite[Corollary 1]{diamond1986}, which states that for almost
every $x\in\R/\Z$ we have as $n\rar\infty$ that
\begin{equation*}
\sum_{k=1}^na_k(x)=\frac{1+o(1)}{\log 2}n\log n+O\left(\max_{1\le
k\le n}a_k(x)\right).
\end{equation*}
Setting $n=N(Q,x)-1$ and incorporating the
extra term $a(Q,x)$ into the error, we obtain exactly what is
reported in the theorem.
\end{proof}


\section{Proofs of Theorems \ref{MQestimate2} and \ref{MQestimate1}}\label{MQestsection}
In this section we will use several classical results from the
metric theory of continued fractions. The most fundamental are
the following two theorems. The first gives the probability
density for the partial quotients in the continued fraction
expansion of numbers in $\R/\Z$, and the second establishes that
the partial quotients in the continued fraction expansions of
almost all real numbers are weakly dependent. For both theorems
we refer the reader to \cite{rockett1992}.
\begin{theorem}\cite[Equation
V.5.2]{rockett1992}\label{anprobdist} Let $n$ and $k$ be positive
integers and let $\mu$ be Lebesgue measure. Then there is a
universal constant $0<q<1$ for which
\begin{equation*}
\mu\{a_n(x)=k\}=\int_{1/(k+1)}^{1/k}\left(\frac{1}{\log
2}\frac{1}{1+x}+O(q^n)\right)dx.
\end{equation*}
\end{theorem}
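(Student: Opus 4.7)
The plan is to reduce the statement to Kuzmin's quantitative refinement of the Gauss--Kuzmin theorem on the ergodic behavior of the Gauss map $T\colon[0,1)\rar[0,1)$ given by $T(x)=\{1/x\}$ (with $T(0)=0$).

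First I would note that for irrational $x$ one has $a_n(x)=k$ precisely when $T^{n-1}(x)\in(1/(k+1),1/k]$. Hence, writing $\rho_{n-1}$ for the density of the push-forward of Lebesgue measure under $T^{n-1}$,
\[\mu\{a_n(x)=k\}=\int_{1/(k+1)}^{1/k}\rho_{n-1}(x)\,dx,\]
and the claim follows immediately from Kuzmin's estimate that, for some absolute $0<q<1$,
\[\rho_n(x)=\frac{1}{\log 2}\cdot\frac{1}{1+x}+O(q^n)\]
uniformly in $x\in[0,1]$.

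To prove Kuzmin's estimate I would introduce the transfer operator $L$ associated to $T$, acting on densities by
\[(L\rho)(x)=\sum_{k=1}^\infty\frac{1}{(k+x)^2}\,\rho\left(\frac{1}{k+x}\right).\]
A direct computation, telescoping the sum $\frac{1}{(k+x)(k+x+1)}$, shows that the Gauss density $h(x)=1/((\log 2)(1+x))$ is a fixed point of $L$, and starting from $\rho_0\equiv 1$ one has $\rho_n=L^n\rho_0$. Kuzmin's bound is therefore equivalent to the assertion that $L^n\rho_0\rar h$ at a geometric rate in the supremum norm.

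The main obstacle, and the technical heart of the matter, is establishing this spectral gap for $L$. Kuzmin's original approach exploits positivity of $L$ together with its smoothing effect on derivatives: differentiating $L\rho$ term by term and using the rapid decay of the summands $(k+x)^{-2}$ yields a recursive inequality of the form
\[\sup_{x\in[0,1]}\bigl|(L^{n+1}\rho)'(x)\bigr|\le\lambda\sup_{x\in[0,1]}\bigl|(L^n\rho)'(x)\bigr|+C\lambda^n\]
for some $0<\lambda<1$ independent of $\rho$, which upon iteration forces $L^n\rho-h=O(q^n)$ uniformly. A conceptually cleaner modern route is to observe that $L$ preserves the cone of positive decreasing densities and strictly contracts it in the Hilbert projective metric, whereupon Birkhoff's contraction theorem delivers the universal rate $q$ at once. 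In either incarnation the delicate point is to produce an estimate that depends on the initial density only through regularity properties already possessed by $\rho_0\equiv 1$.
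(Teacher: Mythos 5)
Your reduction of $\mu\{a_n(x)=k\}$ to the Gauss--Kuzmin--L\'evy estimate via $a_n(x)=a_1(T^{n-1}x)$ and the transfer operator $L$ is correct, and it is exactly the argument underlying the result as it appears in the source: the paper itself offers no proof, quoting the statement directly from Rockett--Sz\"usz (Equation V.5.2), whose Chapter V derivation is the same Kuzmin-type iteration of $L$ on the density $\rho_0\equiv 1$ that you sketch. So your proposal matches the intended proof; just note that your error term is $O(q^{n-1})$ from $\rho_{n-1}$, which is absorbed into $O(q^n)$ by adjusting the constant.
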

\begin{theorem}\cite[Theorem
V.7.1]{rockett1992}\label{weakdepthm} Let $n,k,r,$ and $s$ be
positive integers and let $\mu$ denote Lebesgue measure on
$\R/\Z$. Then
\begin{align*}
&\mu\{a_n(x)=r\text{ and }a_{n+k}(x)=s\}\\
&\qquad=\mu\{a_n(x)=r\}\cdot\mu\{a_{n+k}(x)=s\}\cdot(1+O(q^k)),
\end{align*}
where $0<q<1.$
\end{theorem}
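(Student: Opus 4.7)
The plan is to reduce the two-point correlation statement to the strong form of Kuzmin's theorem on the Gauss map $T(x)=\{1/x\}$, using the identity $a_{n+k}(x)=a_k(T^n x)$ and the standard cylinder decomposition. I would first partition
$$\{x : a_n(x) = r\} = \bigsqcup_{b_1,\ldots,b_{n-1}} C(b_1,\ldots,b_{n-1},r),$$
where each cylinder $C = C(b_1,\ldots,b_{n-1},r)$ is a half-open interval. On such a cylinder, $T^n$ is a M\"obius transformation sending $C$ bijectively onto $[0,1)$, and a change of variables gives the conditional density of $T^n x$ given $x\in C$ as
$$\rho_C(y) = \frac{1}{\mu(C)\,(q_n + q_{n-1}y)^2}\qquad (y\in[0,1)).$$
Since $\mu(C) = 1/(q_n(q_n+q_{n-1}))$, a direct computation shows that every $\rho_C$ is bounded in $[1/2,2]$ and has Lipschitz constant at most $4$, uniformly over the choice of cylinder. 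Call this uniform class of densities $\mc{D}$.

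Next I would invoke the strong Gauss--Kuzmin--L\'evy theorem from \cite[Ch.~V]{rockett1992}, in the following form: for any $\rho \in \mc{D}$,
$$(\mathcal{L}^k \rho)(y) = \frac{1}{(\log 2)(1+y)}\bigl(1 + O(q^k)\bigr),$$
uniformly in $y\in[0,1]$, with $0<q<1$ and implied constant depending only on $\mc{D}$; here $\mathcal{L}$ is the Perron--Frobenius transfer operator of $T$. The multiplicative form is legitimate because the Gauss density is bounded below on $[0,1]$.

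Using the identity $\{a_{n+k}=s\} = T^{-(n+k-1)}\{a_1=s\}$, integration against $\rho_C$ then gives
$$\mu\bigl(C \cap \{a_{n+k}=s\}\bigr) = \mu(C)\int_{1/(s+1)}^{1/s}\!\frac{dy}{(\log 2)(1+y)}\bigl(1+O(q^k)\bigr).$$
Summing over all cylinders with $a_n=r$ produces the Gauss-measure of $\{a_1=s\}$ times $\mu\{a_n=r\}\,(1+O(q^k))$, and finally Theorem \ref{anprobdist} replaces that Gauss integral by $\mu\{a_{n+k}(x)=s\}$ at the harmless additional cost of a $(1+O(q^{n+k}))$ factor, yielding the claimed identity.

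The main obstacle is the quantitative convergence in the second step. The textbook Kuzmin--L\'evy statement handles iteration starting from Lebesgue measure, whereas here I need a uniform exponential rate starting from an \emph{arbitrary} conditional density $\rho_C$. The right abstract input is the spectral gap of $\mathcal{L}$ on a Banach space of Lipschitz functions, but to apply it one must first verify that the family $\{\rho_C\}$, indexed by all level-$n$ cylinders and all $n$, sits inside a single bounded Lipschitz class independent of $n$ and of the coordinates defining $C$; that verification is precisely the explicit computation packaged at the end of the first paragraph.
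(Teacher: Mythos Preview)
The paper does not prove this statement at all; it is quoted verbatim as \cite[Theorem V.7.1]{rockett1992} and used as a black box, so there is no in-paper argument to compare against. Your outline is correct and is essentially the classical proof: decompose $\{a_n=r\}$ into rank-$n$ cylinders, push forward Lebesgue measure through $T^n$ to obtain conditional densities $\rho_C(y)=q_n(q_n+q_{n-1})/(q_n+q_{n-1}y)^2$ that lie in a fixed bounded Lipschitz class, and then invoke the exponential convergence of the transfer operator $\mathcal{L}$ to the Gauss density uniformly over that class. Your identification of the one genuine subtlety---that the Kuzmin--L\'evy rate must be uniform over \emph{all} cylinder densities, not just Lebesgue, and that this follows once the $\rho_C$ are shown to sit in a single $n$-independent Lipschitz ball---is exactly right, and your explicit bounds $1/2\le\rho_C\le 2$, $\|\rho_C'\|_\infty\le 4$ are correct. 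The final appeal to Theorem~\ref{anprobdist} to convert the Gauss measure of $\{a_1=s\}$ into $\mu\{a_{n+k}=s\}$ with a multiplicative $(1+O(q^{n+k}))$ error is also legitimate, since both quantities are of exact order $s^{-2}$.
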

Now we discuss some of the key elements which will be needed for
the proof of Theorem \ref{MQestimate1}. For each pair of positive
integers $m$ and $n$ we define a function $f_{m,n}:\R/\Z\rar\R$
by
\begin{equation*}
f_{m,n}(x)=\begin{cases}1&\text{ if }a_n(x)\ge m,\\0&\text{
else}.\end{cases}
\end{equation*}
It is clear from Theorem \ref{anprobdist} that
\begin{align}
\int_{\R/\Z}f_{m,n}~d\mu&=\int_0^{1/m}\left(\frac{1}{\log
2}\frac{1}{1+x}+O(q^n)\right)~dx\nonumber\\
&=\frac{\log\left(1+\frac{1}{m}\right)}{\log
2}+O\left(\frac{q^n}{m}\right)\label{Efmnsum1},
\end{align}
from which it follows that
\begin{equation}
\int_{\R/\Z}\sum_{i=1}^nf_{m,i}~d\mu=\frac{n\log\left(1+\frac{1}{m}\right)}{\log
2}+O\left(\frac{1}{m}\right)\label{Efmnsum2}.
\end{equation}
Using the weak dependence of partial quotients we prove the
following variance estimate.
\begin{lemma}\label{varest1}
Let $m, n_1,$ and $n_2$ be positive integers with $n_1<n_2$. Then
we have that
\begin{equation*}
\var\left(\sum_{i=n_1}^{n_2}f_{m,i}\right)\ll\int_{\R/\Z}\sum_{i=n_1}^{n_2}f_{m,i}~d\mu,
\end{equation*}
and the implied constant is independent of $m, n_1$ and $n_2$.
\end{lemma}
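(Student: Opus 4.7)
The plan is to expand the variance in the standard way and then split into diagonal (variance) and off-diagonal (covariance) contributions, controlling the latter via Theorem \ref{weakdepthm}. Writing $F = \sum_{i=n_1}^{n_2} f_{m,i}$, I would start from
\begin{equation*}
\var(F) = \sum_{i=n_1}^{n_2}\var(f_{m,i}) + 2\sum_{n_1\le i<j\le n_2}\cov(f_{m,i},f_{m,j}).
\end{equation*}

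For the diagonal terms, since $f_{m,i}$ takes values in $\{0,1\}$, we have $\var(f_{m,i})\le E(f_{m,i}) = \mu\{a_i\ge m\}$, so the diagonal contribution is at most $\int_{\R/\Z}\sum_{i=n_1}^{n_2}f_{m,i}\,d\mu$, which is exactly the quantity we want to dominate. This part is immediate.

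The main work is the covariance estimate. By summing the statement of Theorem \ref{weakdepthm} over $r,s\ge m$, one obtains
\begin{equation*}
\mu\{a_i\ge m\text{ and }a_j\ge m\} = \mu\{a_i\ge m\}\cdot\mu\{a_j\ge m\}\cdot(1+O(q^{j-i})),
\end{equation*}
so that $|\cov(f_{m,i},f_{m,j})|\ll q^{j-i}E(f_{m,i})E(f_{m,j})$. From (\ref{Efmnsum1}) we have $E(f_{m,j})\ll 1/m$ uniformly in $j$, and therefore for each fixed $i$,
\begin{equation*}
\sum_{j>i}|\cov(f_{m,i},f_{m,j})| \ll \frac{E(f_{m,i})}{m}\sum_{k=1}^\infty q^k \ll \frac{E(f_{m,i})}{m}.
\end{equation*}
Summing over $i$ gives an off-diagonal total of order $\frac{1}{m}\int_{\R/\Z}\sum_{i=n_1}^{n_2}f_{m,i}\,d\mu$, which is in fact smaller than the diagonal bound by a factor of $m$. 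Combining the two estimates yields the claimed variance bound with an implied constant depending only on the universal constant $q$ from Theorem \ref{weakdepthm}.

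The only subtlety is making sure the error term in the weak dependence statement survives the summation over $r,s\ge m$ without degrading the geometric factor $q^{j-i}$; this is why I prefer to apply Theorem \ref{weakdepthm} in its joint-distribution form and then sum, rather than working with tail probabilities directly. Apart from this, the argument is a routine variance-of-sums computation, and the final inequality $\var(F)\ll \int F\,d\mu$ holds with an absolute implied constant independent of $m$, $n_1$, and $n_2$, as required.
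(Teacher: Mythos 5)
Your proof is correct and follows essentially the same route as the paper: expand the variance into diagonal and covariance terms, sum Theorem \ref{weakdepthm} over the tails $r,s\ge m$ to get $|\cov(f_{m,i},f_{m,j})|\ll q^{j-i}E(f_{m,i})E(f_{m,j})$, and control the geometric series. The only cosmetic difference is that you bound $E(f_{m,j})\ll 1/m$ directly, whereas the paper uses (\ref{Efmnsum1}) to show $\sum_j q^jE(f_{m,i+j})\ll E(f_{m,i})$ and ends with $\sum_i E(f_{m,i})^2\ll\sum_i E(f_{m,i})$; both steps are immediate consequences of the same estimate.
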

\begin{proof}
Applying Theorem \ref{weakdepthm} we have for $i,j\ge 1$ that
\begin{align}
\int_{\R/\Z}f_{m,i}f_{m,i+j}~d\mu&=\sum_{k=m}^\infty\sum_{\ell=m}^\infty\mu\{a_i(x)=k\text{
and }a_{i+j}(x)=\ell\}\nonumber\\
&=\sum_{k=m}^\infty\sum_{\ell=m}^\infty\mu\{a_i(x)=k\}\cdot\mu\{a_{i+j}(x)=\ell\}\cdot\left(1+O(q^j)\right)\nonumber\\
&=\int_{\R/\Z}f_{m,i}~d\mu\int_{\R/\Z}f_{m,i+j}~d\mu\cdot\left(1+O(q^j)\right).\label{fmnind1}
\end{align}
Then for the variances we have that
\begin{align}
&\var\left(\sum_{i=n_1}^{n_2}f_{m,i}\right)\nonumber\\
&=\sum_{i=n_1}^{n_2}\left(\int_{\R/\Z}f_{m,i}^2~d\mu
-\left(\int_{\R/\Z}f_{m,i}~d\mu\right)^2\right)\nonumber\\
&\quad+2\sum_{i=n_1}^{n_2-1}\sum_{j=1}^{n_2-i}\left(\int_{\R/\Z}f_{m,i}f_{m,i+j}~d\mu
-\int_{\R/\Z}f_{m,i}~d\mu\int_{\R/\Z}f_{m,i+j}~d\mu\right)\nonumber\\
&\ll\int_{\R/\Z}\sum_{i=n_1}^{n_2}f_{m,i}~d\mu
+\sum_{i=n_1}^{n_2-1}\sum_{j=1}^{n_2-i}q^j\int_{\R/\Z}f_{m,i}~d\mu\int_{\R/\Z}f_{m,i+j}~d\mu\label{varfmn1},
\end{align}
where we have used (\ref{fmnind1}) and the facts that
\begin{align*}
f_{m,i}^2(x)=f_{m,i}(x)\quad\text{
and}\quad\int_{\R/\Z}f_{m,i}~d\mu\le 1.
\end{align*}
Now for the sum on $j$ in (\ref{varfmn1}) we use (\ref{Efmnsum1})
to deduce that
\begin{align*}
\sum_{j=1}^{n_2-i}q^j\int_{\R/\Z}f_{m,i+j}~d\mu&=\sum_{j=1}^{n_2-i}\left(\frac{q^j\log\left(1+\frac{1}{m}\right)}{\log
2}+O\left(\frac{q^{i+2j}}{m}\right)\right)\\
&\ll\int_{\R/\Z}f_{m,i}~d\mu,
\end{align*}
and hence that
\begin{align*}
\var\left(\sum_{i=n_1}^{n_2}f_{m,i}\right)&\ll\int_{\R/\Z}\sum_{i=n_1}^{n_2}f_{m,i}~d\mu
+\sum_{i=n_1}^{n_2}\left(\int_{\R/\Z}f_{m,i}~d\mu\right)^2\\
&\ll\int_{\R/\Z}\sum_{i=n_1}^{n_2}f_{m,i}~d\mu.
\end{align*}
Finally note that all of the implied constants depend at most
upon the universal quantity $q$.
\end{proof}
The reader who is familiar with this subject will have already
deduced that Lemma \ref{varest1} can be used to prove an almost
everywhere convergence result. Indeed the following lemma now
follows immediately from \cite[Lemma 1.5]{harman1998}.
\begin{lemma}\label{AEest1}
Let $m$ be a positive integer. For any $\epsilon >0$ and for
almost every $x\in\R/\Z$ we have as $n\rar\infty$ that
\begin{equation*}
\sum_{i=1}^nf_{m,i}(x)=\frac{n\log\left(1+\frac{1}{m}\right)}{\log
2}+O_\epsilon\left(\frac{n^{1/2}(\log
n)^{3/2+\epsilon}}{m^{1/2}}\right).
\end{equation*}
\end{lemma}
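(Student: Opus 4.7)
The plan is to apply the Gal-Koksma-type almost sure convergence theorem \cite[Lemma 1.5]{harman1998} directly to the variance bound supplied by Lemma \ref{varest1}. Fix $m$ and set $S_n(x) = \sum_{i=1}^n f_{m,i}(x)$, and write $\mu_n := \int_{\R/\Z} S_n \, d\mu$. By (\ref{Efmnsum2}) we already have $\mu_n = n\log(1+1/m)/\log 2 + O(1/m)$, which is precisely the claimed main term, so the entire task reduces to controlling $S_n(x) - \mu_n$ almost everywhere.

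To do this I would take $\phi_m(n) := C(n\log(1+1/m) + 1)$, with $C$ a sufficiently large absolute constant chosen so that Lemma \ref{varest1} combined with (\ref{Efmnsum2}) yields
\[
\var(S_{n_2} - S_{n_1}) \le \phi_m(n_2) - \phi_m(n_1)
\]
for all integers $n_1 < n_2$, with $\phi_m$ nondecreasing in $n$. Harman's lemma then produces, for any $\epsilon > 0$ and almost every $x \in \R/\Z$, the bound
\[
S_n(x) - \mu_n = O_\epsilon\!\left(\phi_m(n)^{1/2}\bigl(\log \phi_m(n)\bigr)^{3/2+\epsilon}\right).
\]
Since $\log(1+1/m) \ll 1/m$ and $\log \phi_m(n) \le \log n + O(1)$, this rearranges to exactly the stated error $O_\epsilon\!\left(n^{1/2}(\log n)^{3/2+\epsilon}/m^{1/2}\right)$, and adding it to the formula for $\mu_n$ gives the lemma.

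I do not expect any serious obstacle since Lemma \ref{varest1} is already in precisely the shape required by Harman's lemma; the substantive work has been done. Two points deserve attention. First, the implied constants in Lemma \ref{varest1} and in (\ref{Efmnsum2}) are genuinely independent of $m$, so that the final error retains the $m^{-1/2}$ scaling asserted in the statement; a careful reading of the proof of Lemma \ref{varest1} confirms this, as the estimates there depend only on the universal constant $q$. Second, the $(\log \phi_m(n))^{3/2+\epsilon}$ factor produced by Harman's lemma must be replaced by $(\log n)^{3/2+\epsilon}$ rather than by a weaker quantity, which is immediate from $\phi_m(n) \ll n$. These verifications are routine and the result follows.
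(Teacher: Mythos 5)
Your proposal is correct and takes essentially the same route as the paper: the paper's own proof consists precisely of feeding the variance bound of Lemma \ref{varest1} (together with the mean computation (\ref{Efmnsum2})) into \cite[Lemma 1.5]{harman1998}, exactly as you do, with the $m^{-1/2}$ factor emerging from the majorant $\phi_m(n)\asymp n\log(1+1/m)$. No gaps; your added checks (uniformity in $m$ of the $L^2$ constants, and $\log\phi_m(n)\ll\log n$) are the right routine verifications.
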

It is tempting to try to conclude that the implied constant in
Lemma \ref{AEest1} should depend only on $\epsilon$. If this were
true we could then form sums over $m$ and find a more direct
route to prove Theorem \ref{MQestimate1}. However care must be
taken in passing from the $L^2$-estimate to the almost everywhere
estimate. The proof of the almost everywhere estimate recorded
here requires an application of the convergence part of the
Borel-Cantelli lemma. In applying this lemma the $L^2$-estimates
are used together with Chebyshev's inequality to conclude that
certain events can not happen infinitely often almost everywhere.
However this is not a quantitative statement and in the end it
forces the error term in our almost everywhere estimate to depend
heavily on the functions involved. This observation is the
justification for our choice of proof, for which we will need the
following more technical lemmas.
\begin{lemma}\label{varest2}
Suppose that $g:\N\rar\R$ is a non-negative function and that $M_1, M_2, n_1,$ and $n_2$ are positive integers with $M_1<M_2$ and $n_1<n_2$. Then
\begin{equation*}
\var\left(\sum_{m=M_1}^{M_2}g(m)\sum_{i=n_1}^{n_2}f_{m,i}\right)
\ll\left(\sum_{m=M_1}^{M_2}g(m)\right)\int_{\R/\Z}\sum_{m=M_1}^{M_2}g(m)\sum_{i=n_1}^{n_2}f_{m,i}~d\mu,
\end{equation*}
and the implied constant is independent of $M, n_1,$ and $n_2$.
\end{lemma}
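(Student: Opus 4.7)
The plan is to reduce the statement to Lemma \ref{varest1} via two applications of the Cauchy-Schwarz inequality, so that the weak dependence of the partial quotients need only be used implicitly. Set $S_m=\sum_{i=n_1}^{n_2}f_{m,i}$ and $S=\sum_{m=M_1}^{M_2}g(m)S_m$, and expand
\begin{equation*}
\var(S)=\sum_{m_1=M_1}^{M_2}\sum_{m_2=M_1}^{M_2}g(m_1)g(m_2)\cov(S_{m_1},S_{m_2}).
\end{equation*}
The standard Cauchy-Schwarz bound $|\cov(S_{m_1},S_{m_2})|\le\sqrt{\var(S_{m_1})\var(S_{m_2})}$, combined with Lemma \ref{varest1}, gives $|\cov(S_{m_1},S_{m_2})|\ll\sqrt{\int_{\R/\Z}S_{m_1}~d\mu\cdot\int_{\R/\Z}S_{m_2}~d\mu}$ with implied constant independent of $m_1,m_2,n_1,n_2$.

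Plugging this back in yields
\begin{equation*}
\var(S)\ll\left(\sum_{m=M_1}^{M_2}g(m)\left(\int_{\R/\Z}S_m~d\mu\right)^{1/2}\right)^2.
\end{equation*}
A second application of Cauchy-Schwarz, with the inner sum written as $\sum_m\sqrt{g(m)}\cdot\sqrt{g(m)\int_{\R/\Z}S_m~d\mu}$, then gives
\begin{equation*}
\var(S)\ll\left(\sum_{m=M_1}^{M_2}g(m)\right)\sum_{m=M_1}^{M_2}g(m)\int_{\R/\Z}S_m~d\mu=\left(\sum_{m=M_1}^{M_2}g(m)\right)\int_{\R/\Z}S~d\mu,
\end{equation*}
which is precisely the claimed inequality.

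I do not anticipate a genuine obstacle. All the dependence structure of the partial quotients was already absorbed into Lemma \ref{varest1}; the present argument only needs linearity, two standard Cauchy-Schwarz steps, and the non-negativity of $g$. The one point meriting mild attention is the uniformity of the implied constant from Lemma \ref{varest1} in $m$, $n_1$, and $n_2$, which was explicitly recorded there, so no new dependence on $M_1, M_2, n_1$, or $n_2$ can creep in. A more hands-on alternative would expand $\var(S)$ as a quadruple sum over $(m_1,m_2,i_1,i_2)$, separate $i_1=i_2$ from $i_1\neq i_2$, exploit $f_{m_1,i}f_{m_2,i}=f_{\max(m_1,m_2),i}$ together with symmetry on the diagonal, and invoke Theorem \ref{weakdepthm} off the diagonal; but the Cauchy-Schwarz route sketched above is both shorter and more transparent.
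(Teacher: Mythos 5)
Your proof is correct, but it follows a genuinely different route from the paper. You treat Lemma \ref{varest1} as a black box: expanding $\var(S)$ bilinearly over pairs $(m_1,m_2)$, bounding each $\cov(S_{m_1},S_{m_2})$ by $(\var S_{m_1}\var S_{m_2})^{1/2}$, invoking the uniform constant from Lemma \ref{varest1}, and finishing with a second Cauchy--Schwarz in $m$; every step (bilinearity of covariance, non-negativity of $g$, uniformity of the constant) checks out, so the argument is complete. The paper instead expands the full quadruple sum over $(m_1,m_2,i,j)$, proves a cross-term decorrelation estimate (its inequality (\ref{fmnind2})) for $\int f_{m_1,i}f_{m_2,i+j}\,d\mu$ with possibly distinct $m_1,m_2$, and handles the diagonal via $f_{m_1,i}f_{m_2,i}=f_{\max\{m_1,m_2\},i}$ together with $\int f_{m_1,i}\,d\mu\int f_{m_2,i}\,d\mu\le\int f_{\max\{m_1,m_2\},i}\,d\mu$ --- essentially your ``hands-on alternative.'' What the paper's heavier machinery buys is the reusable estimate (\ref{fmnind2}), which is exactly what drives the proof of Lemma \ref{varest3}: there the required bound is the asymmetric one $\cov(Y_1,Y_2)\ll\bigl(\sum_{m=M_1+1}^{M_2}g(m)\bigr)\int_{\R/\Z}Y_1\,d\mu$, with the \emph{small} tail sum of $g$ multiplying the \emph{large} integral $\int Y_1\,d\mu$, and a Cauchy--Schwarz bound of the form $(\var Y_1\var Y_2)^{1/2}$ would only give the geometric mean of the two sides, which is not sufficient for the gap estimate in Lemma \ref{aelemma1}. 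So your argument is a shorter and more transparent proof of Lemma \ref{varest2} itself, but it does not replace the covariance computation needed later, whereas the paper's single method covers both lemmas at once.
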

\begin{lemma}\label{varest3}
Suppose that $g, M_1, M_2, n_1,$ and $n_2$ are as in Lemma \ref{varest2}. Write
\begin{align*}
Y_1&=\sum_{m=1}^{M_1}g(m)\sum_{i=n_1+1}^{n_2}f_{m,i},\text{ and}\\
Y_2&=\sum_{m=M_1+1}^{M_2}g(m)\sum_{i=1}^{n_2}f_{m,i}.
\end{align*}
Then for the covariance of $Y_1$ and $Y_2$ we have that
\begin{align*}
\cov\left(Y_1,Y_2\right)\ll\left(\sum_{m=M_1+1}^{M_2}g(m)\right)\int_{\R/\Z}Y_1~d\mu,
\end{align*}
and the implied constant is independent of $M_1, M_2, n_1,$ and $n_2$.
\end{lemma}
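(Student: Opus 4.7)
The plan is to expand the covariance bilinearly as
\[
\cov(Y_1, Y_2) = \sum_{m_1=1}^{M_1}\sum_{m_2=M_1+1}^{M_2} g(m_1)g(m_2) \sum_{i=n_1+1}^{n_2}\sum_{j=1}^{n_2} \cov(f_{m_1,i}, f_{m_2,j}),
\]
and split the inner double sum into a diagonal contribution ($i = j$, which forces $j \ge n_1+1$) and an off-diagonal contribution ($i \neq j$). The diagonal piece is handled by a pointwise observation, while the off-diagonal piece uses the weak dependence estimate of Theorem \ref{weakdepthm}, in much the same spirit as Lemma \ref{varest1}.

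For the diagonal piece, since $m_2 > M_1 \ge m_1$ the pointwise identity $f_{m_1,i}\cdot f_{m_2,i} = f_{m_2,i}$ holds, so that
\[
\cov(f_{m_1,i}, f_{m_2,i}) = \int_{\R/\Z} f_{m_2,i}~d\mu \cdot \left(1 - \int_{\R/\Z} f_{m_1,i}~d\mu\right) \le \int_{\R/\Z} f_{m_1,i}~d\mu,
\]
the last step using $\int_{\R/\Z} f_{m_2,i}~d\mu \le \int_{\R/\Z} f_{m_1,i}~d\mu$ (again by $m_2 > m_1$). Summing $g(m_1)g(m_2)$ times this bound over the admissible $m_1, m_2, i$ then factorises into $\bigl(\sum_{m_2=M_1+1}^{M_2} g(m_2)\bigr) \int_{\R/\Z} Y_1~d\mu$, which is precisely the claimed bound.

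For the off-diagonal piece, I will repeat the derivation of (\ref{fmnind1}) with two distinct thresholds $m_1, m_2$ in place of a single $m$: summing Theorem \ref{weakdepthm} over $k \ge m_1$ and $\ell \ge m_2$ gives
\[
|\cov(f_{m_1,i}, f_{m_2,j})| \ll q^{|i-j|} \int_{\R/\Z} f_{m_1,i}~d\mu \int_{\R/\Z} f_{m_2,j}~d\mu \qquad (i \neq j),
\]
and (\ref{Efmnsum1}) supplies the uniform bound $\int_{\R/\Z} f_{m_2,j}~d\mu \ll 1/m_2$. Summing the geometric factor over $j$ yields $\sum_{j\neq i} q^{|i-j|} \int_{\R/\Z} f_{m_2,j}~d\mu \ll 1/m_2$, so the total off-diagonal contribution is at most a constant times $\bigl(\sum_{m_2=M_1+1}^{M_2} g(m_2)/m_2\bigr)\int_{\R/\Z} Y_1~d\mu$, which is absorbed into the desired bound since $1/m_2 \le 1$. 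The only care required is bookkeeping the asymmetric index ranges (the diagonal contribution is empty for $1 \le j \le n_1$, whereas the off-diagonal argument handles all of $1 \le j \le n_2$), and verifying that the $(1 + O(q^{|i-j|}))$ error in Theorem \ref{weakdepthm} remains uniform after summation over $r \ge m_1$ and $s \ge m_2$; both are immediate. No idea beyond those already used in Lemmas \ref{varest1} and \ref{varest2} is required.
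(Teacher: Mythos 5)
Your proof is correct and follows essentially the same route as the paper's: expand $\cov(Y_1,Y_2)$ bilinearly, handle the diagonal terms via the pointwise identity $f_{m_1,i}f_{m_2,i}=f_{\max\{m_1,m_2\},i}$ together with monotonicity in $m$, and control the off-diagonal terms by the weak-dependence bound from Theorem \ref{weakdepthm} summed as a geometric series in $|i-j|$ (the paper packages this as (\ref{fmnind2}), while you use the uniform bound $\int_{\R/\Z}f_{m_2,j}\,d\mu\ll 1/m_2$, but the computation is the same). No gaps.
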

The proofs of these lemmas use the same ideas as the proof of
Lemma \ref{varest1} and they therefore rely essentially on
Theorem \ref{weakdepthm}.
\begin{proof}[Proof of Lemma \ref{varest2}]
First note that the argument used to show (\ref{fmnind1}) also shows that for any positive integers $m_1, m_2,$ and $i$ and for any non-zero integer $j\ge 1-i$ we have that
\begin{align*}
\int_{\R/\Z}f_{m_1,i}f_{m_2,i+j}~d\mu&-\int_{\R/\Z}f_{m_1,i}~d\mu
\int_{\R/\Z}f_{m_2,i+j}~d\mu\nonumber\\
&\qquad\ll q^{|j|}\int_{\R/\Z}f_{m_1,i}~d\mu
\int_{\R/\Z}f_{m_2,i+j}~d\mu.
\end{align*}
As before if we sum over $j$ we see that
\begin{align}
\sum_{\substack{j=1-i\\j\not=0}}^\infty\left(\int_{\R/\Z}f_{m_1,i}f_{m_2,i+j}~d\mu\right.&\left.-\int_{\R/\Z}f_{m_1,i}~d\mu
\int_{\R/\Z}f_{m_2,i+j}~d\mu\right)\label{fmnind2}\\
&\ll \int_{\R/\Z}f_{m_1,i}~d\mu\int_{\R/\Z}f_{m_2,i}~d\mu.\nonumber
\end{align}
Using this fact we have that
\begin{align*}
&\var\left(\sum_{m=M_1}^{M_2}g(m)\sum_{i=n_1}^{n_2}f_{m,i}\right)\\
&\qquad=\sum_{m_1,m_2=M_1}^{M_2}g(m_1)g(m_2)\sum_{i=n_1}^{n_2}
\sum_{j=n_1-i}^{n_2-i}\left(\int_{\R/\Z}f_{m_1,i}f_{m_2,i+j}~d\mu\right.\\
&\qquad\qquad\qquad\qquad\qquad\qquad\qquad\qquad\qquad\left.-\int_{\R/\Z}f_{m_1,i}~d\mu\int_{\R/\Z}f_{m_2,i+j}~d\mu\right)\\
&\qquad=\sum_{m_1,m_2=M_1}^{M_2}g(m_1)g(m_2)\left(O\left(\sum_{i=n_1}^{n_2}\int_{\R/\Z}f_{m_1,i}~d\mu
\int_{\R/\Z}f_{m_2,i}~d\mu\right)\right.\\
&\qquad\qquad\qquad\left.+\sum_{i=n_1}^{n_2}\left(\int_{\R/\Z}f_{m_1,i}f_{m_2,i}~d\mu
-\int_{\R/\Z}f_{m_1,i}~d\mu\int_{\R/\Z}f_{m_2,i}~d\mu\right)\right).
\end{align*}
Now since
\begin{align*}
\int_{\R/\Z}f_{m_1,i}f_{m_2,i}~d\mu&=\int_{\R/\Z}f_{\max\{m_1,m_2\},i}~d\mu~\text{ and}\\
\int_{\R/\Z}f_{m_1,i}~d\mu\int_{\R/\Z}f_{m_2,i}~d\mu&\le\int_{\R/\Z}f_{\max\{m_1,m_2\},i}~d\mu
\end{align*}
we have that
\begin{align*}
&\var\left(\sum_{m=M_1}^{M_2}g(m)\sum_{i=n_1}^{n_2}f_{m,i}\right)\\
&\qquad\ll\sum_{m_1,m_2=M_1}^{M_2}g(m_1)g(m_2)\sum_{i=n_1}^{n_2}\int_{\R/\Z}f_{\max\{m_1,m_2\},i}~d\mu\\
&\qquad\le\sum_{m_1,m_2=M_1}^{M_2}g(m_1)g(m_2)\sum_{i=n_1}^{n_2}\int_{\R/\Z}f_{m_1,i}~d\mu\\
&\qquad =\left(\sum_{m=M_1}^{M_2}g(m)\right)\int_{\R/\Z}\sum_{m=M_1}^{M_2}g(m)\sum_{i=n_1}^{n_2}f_{m,i}~d\mu.
\end{align*}
\end{proof}
\begin{proof}[Proof of Lemma \ref{varest3}]
Using (\ref{fmnind2}) we find that $\cov (Y_1,Y_2)$ equals
\begin{align*}
&\sum_{m_1=2}^{M_1}\sum_{m_2=M_1+1}^{M_2}g(m_1)g(m_2)\sum_{i=n_1+1}^{n_2}
\sum_{j=1-i}^{n_2-i}\left(\int_{\R/\Z}f_{m_1,i}f_{m_2,i+j}~d\mu\right.\\
&\qquad\qquad\qquad\qquad\qquad\qquad\qquad\qquad\qquad\left.-\int_{\R/\Z}f_{m_1,i}~d\mu\int_{\R/\Z}f_{m_2,i+j}~d\mu\right)\\
&\quad=\sum_{m_1=2}^{M_1}\sum_{m_2=M_1+1}^{M_2}g(m_1)g(m_2)\left(O\left(\sum_{i=n_1+1}^{n_2}\int_{\R/\Z}f_{m_1,i}~d\mu
\int_{\R/\Z}f_{m_2,i}~d\mu\right)\right.\\
&\qquad\qquad\qquad\left.+\sum_{i=n_1+1}^{n_2}\left(\int_{\R/\Z}f_{m_1,i}f_{m_2,i}~d\mu
-\int_{\R/\Z}f_{m_1,i}~d\mu\int_{\R/\Z}f_{m_2,i}~d\mu\right)\right)\\
&\quad\ll \sum_{m_1=2}^{M_1}\sum_{m_2=M_1+1}^{M_2}g(m_1)g(m_2)\sum_{i=n_1+1}^{n_2}\int_{\R/\Z}f_{\max\{m_1,m_2\},i}~d\mu\\
&\quad\le\sum_{m_1=2}^{M_1}\sum_{m_2=M_1+1}^{M_2}g(m_1)g(m_2)\sum_{i=n_1+1}^{n_2}\int_{\R/\Z}f_{m_1,i}~d\mu\\
&\quad =\left(\sum_{m=M_1+1}^{M_2}g(m)\right)\int_{\R/\Z}\sum_{m=1}^{M_1}g(m)\sum_{i=n_1+1}^{n_2}f_{m,i}~d\mu.
\end{align*}
\end{proof}
Now we use Lemmas \ref{varest2} and \ref{varest3} to prove a crucial almost everywhere result. Suppose that $f:\N\rar\N$ and $g:\N\rar\R$ are non-negative and that $f$ is increasing. For each positive integer $n$ define $X_{n,f}:\R/\Z\rar\R$ by
\begin{equation*}
X_{n,f}(x)=\sum_{m=2}^{f(n)}g(m)\sum_{i=1}^nf_{m,n}(x).
\end{equation*}
Also define $G_f:\N\rar\R$ by
\begin{equation*}
G_f(n)=\left(\sum_{m=1}^{f((n+1)^2)}g(m)\right)\left(\sum_{m=1}^{f(n^2)}g(m)\right)^{-1}.
\end{equation*}
We have the following result.
\begin{lemma}\label{aelemma1}
Let $f, g, X_{n,f},$ and $G_f$ be defined as above and let $\epsilon >0$. Assume that
\begin{align}
\sum_{m=1}^\infty\frac{g(m)}{m}&<\infty~\text{ and}\label{assump1}\\
\sum_{n=1}^\infty \frac{G_f(n)}{n(\log n)^{1+2\epsilon}}&<\infty.\label{assump2}
\end{align}
Then for almost every $x\in\R/\Z$ we have as $n\rar\infty$ that
\begin{align*}
X_{n,f}(x)&=\frac{n}{\log 2}\sum_{m=2}^{f(n)}g(m)\log\left(1+\frac{1}{m}\right)\\
&\qquad+O_\epsilon\left(\left(\sum_{m=1}^{f(n)}g(m)\right)^{1/2}n^{3/4}(\log n)^{1/2+\epsilon}\right).
\end{align*}
\end{lemma}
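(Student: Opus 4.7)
The plan is to compute the mean and variance of $X_{n,f}$ and then apply Chebyshev's inequality together with the convergence half of the Borel--Cantelli lemma twice: once along the subsequence $n=k^{2}$ to get the estimate there, and once on the block increments $X_{(k+1)^{2},f}-X_{k^{2},f}$ to interpolate. Integrating $X_{n,f}$ term by term using (\ref{Efmnsum2}) and hypothesis (\ref{assump1}) gives
\begin{equation*}
\int_{\R/\Z}X_{n,f}\,d\mu=\frac{n}{\log 2}\sum_{m=2}^{f(n)}g(m)\log\!\Bigl(1+\tfrac{1}{m}\Bigr)+O(1),
\end{equation*}
so the main term of the lemma is already in place. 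Applying Lemma \ref{varest2} with $M_{1}=2$, $M_{2}=f(n)$, $n_{1}=1$, $n_{2}=n$ and again using (\ref{assump1}) gives $\var X_{n,f}\ll n\sum_{m=1}^{f(n)}g(m)$.

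Setting $\lambda_{k}:=k^{3/2}\bigl(\sum_{m=1}^{f(k^{2})}g(m)\bigr)^{1/2}(\log k)^{1/2+\epsilon}$, Chebyshev bounds the measure of the set where $|X_{k^{2},f}-\int X_{k^{2},f}\,d\mu|>\lambda_{k}$ by $O\bigl(1/(k(\log k)^{1+2\epsilon})\bigr)$, which is summable in $k$, so Borel--Cantelli yields the lemma along $n=k^{2}$. Since $f$ is increasing and the summands in both the outer and inner sums defining $X_{n,f}$ are non-negative, $X_{n,f}$ is pointwise non-decreasing in $n$, so $X_{k^{2},f}\le X_{n,f}\le X_{(k+1)^{2},f}$ for $k^{2}\le n<(k+1)^{2}$. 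The bulk of the argument is therefore to control the block increment $D_{k}:=X_{(k+1)^{2},f}-X_{k^{2},f}$.

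Splitting $D_{k}=A_{k}+B_{k}$ according to its ``old-$m$, new-$i$'' and ``new-$m$, all-$i$'' parts, I would use Lemma \ref{varest2} to bound $\var A_{k}$ and $\var B_{k}$, and Lemma \ref{varest3} to bound $\cov(A_{k},B_{k})$, producing $\var D_{k}\ll(k+1)^{2}G_{f}(k)\sum_{m=1}^{f(k^{2})}g(m)$. Applying Chebyshev again with the same threshold $\lambda_{k}$ gives a tail of size $G_{f}(k)/(k(\log k)^{1+2\epsilon})$, summable precisely by hypothesis (\ref{assump2}), so a second Borel--Cantelli application yields $D_{k}(x)\le\int D_{k}\,d\mu+O(\lambda_{k})$ almost everywhere. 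A direct mean computation then shows $\int D_{k}\,d\mu=O(\lambda_{k})$: the ``old-$m$'' contribution is $O(k)$ by (\ref{assump1}), and the ``new-$m$'' tail $(k+1)^{2}\sum_{m=f(k^{2})+1}^{f((k+1)^{2})}g(m)\log(1+1/m)$ is forced into the required order by combining the definition of $G_{f}$, the growth of $f$, and the bound $G_{f}(k)=o(k(\log k)^{1+2\epsilon})$ implied by (\ref{assump2}).

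Sandwiching $X_{n,f}$ between $X_{k^{2},f}$ and $X_{k^{2},f}+D_{k}$, and comparing the corresponding means through $\int D_{k}\,d\mu$, then gives $|X_{n,f}(x)-\int X_{n,f}\,d\mu|\ll\lambda_{k}\asymp n^{3/4}\bigl(\sum_{m=1}^{f(n)}g(m)\bigr)^{1/2}(\log n)^{1/2+\epsilon}$ almost everywhere, which is the claimed error. The main obstacle is the coordination of the two Borel--Cantelli applications: the single threshold $\lambda_{k}$ must simultaneously make both the $X_{k^{2},f}$-deviation and the $D_{k}$-deviation probabilities summable, and also dominate the deterministic mean correction $\int D_{k}\,d\mu$. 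Hypothesis (\ref{assump2}) is exactly what is needed to keep the block-increment tail summable despite the potentially large factor $G_{f}(k)$ appearing in $\var D_{k}$, and hypothesis (\ref{assump1}) is what lets the mean computation close under the same threshold.
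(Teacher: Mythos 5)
Your architecture is largely the paper's: compute the mean from (\ref{Efmnsum2}), get the variance bounds from Lemmas \ref{varest2} and \ref{varest3}, apply Chebyshev and Borel--Cantelli along $n=k^2$, then interpolate across the blocks $[k^2,(k+1)^2)$. The subsequence step is correct. The divergence, and the genuine gap, is in your treatment of the gaps. You exploit monotonicity of $X_{n,f}$ in $n$ and control only the full block increment $D_k=X_{(k+1)^2,f}-X_{k^2,f}$, which forces you to prove the deterministic bound $\int_{\R/\Z} D_k\,d\mu=O(\lambda_k)$. That bound does not follow from (\ref{assump1}) and (\ref{assump2}). By (\ref{Efmnsum2}) the ``new-$m$'' part of the mean is of size about $(k+1)^2\sum_{m=f(k^2)+1}^{f((k+1)^2)}g(m)\log\left(1+\frac1m\right)$, and hypothesis (\ref{assump1}) only guarantees that these tail pieces are summable over $k$; a single block may still carry a piece of size, say, $k^{-1/4}$ (on a sparse set of $k$), making this term of order $k^{7/4}$, far above $\lambda_k\asymp k^{3/2}(\log k)^{1/2+\epsilon}\left(\sum_{m\le f(k^2)}g(m)\right)^{1/2}$ when $\sum_m g(m)$ is bounded. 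Moreover $G_f$ constrains ratios of the unweighted sums $\sum_{m}g(m)$, not the weighted sums $\sum_m g(m)\log(1+1/m)$ multiplied by $k^2$, and the lemma assumes nothing about the growth of $f$ beyond monotonicity (the choice $f(n)=\lfloor n(\log n)^{1/2+\delta}\rfloor$ appears only in Theorem \ref{MQestimate1}, where your bound would in fact hold). So the sentence ``forced into the required order by the definition of $G_f$, the growth of $f$, and $G_f(k)=o(k(\log k)^{1+2\epsilon})$'' is not a proof, and in the stated generality the inequality you need can fail.

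The paper's gap argument is structured precisely so that this quantity never needs to be compared with the error threshold: for each individual $n$ with $n_i<n<n_{i+1}$ it bounds $\mu\{|(X_n-X_{n_i})-E(X_n-X_{n_i})|>\epsilon(n_i)\}$ by Chebyshev using the increment variance from Lemmas \ref{varest2} and \ref{varest3}, and takes a union bound over the block (the sum in (\ref{gapsum1})). Since $X_n-E(X_n)=(X_{n_i}-E(X_{n_i}))+\big((X_n-X_{n_i})-E(X_n-X_{n_i})\big)$, the mean of the increment cancels identically, and only the summability of the Chebyshev tails is needed, which is exactly where (\ref{assump2}) enters. (Even there the paper quietly bounds $E(X_n-X_{n_i})$ by a constant times $n-n_i$, which also ignores the new-$m$ contribution, but that is needed only in summed form and is harmless in the intended application; your route needs the much stronger pointwise bound.) To repair your proof, either add a hypothesis ensuring the blockwise new-$m$ mean is $O(\lambda_k)$ --- which holds for the $f$ and $g$ of Theorem \ref{MQestimate1} --- or replace the monotonicity sandwich by the per-$n$ increment-deviation estimate as in the paper.
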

\begin{proof}
To simplify equations let us write $X_n$ for $X_{n,f}$ and $E(X_n)$ for the expected value of $X_n$ on $\R/\Z$. For each positive integer $i$ let $n_i=i^2$, and for each $n$ let
\[\epsilon(n)=\left(\sum_{m=1}^{f(n)}g(m)\right)^{1/2}E(X_n)^{3/4}(\log E(X_n))^{1/2+\epsilon}.\] By Chebyshev's inequality together with Lemma \ref{varest2} we have that
\begin{align*}
\mu\left\{|X_{n_i}-E(X_{n_i})|>\epsilon(n_i)\right\}
&\ll\frac{\left(\sum_{m=1}^{f(n_i)}g(m)\right)E(X_{n_i})}{\epsilon(n_i)^2}\\
&=\frac{1}{E(X_{n_i})^{1/2}(\log(E(X_{n_i})))^{1+2\epsilon}}\\
&\ll\frac{1}{i(\log i)^{1+2\epsilon}}.
\end{align*}
Thus by the Borel-Cantelli lemma for almost every $x\in\R/\Z$ there are only finitely many $i$ for which \[x\in\left\{|X_{n_i}(x)-E(X_{n_i})(x)|>\epsilon(n_i)\right\}.\]
For the gaps notice that for positive integers $n_2>n_1$ we have
\begin{align*}
X_{n_2}-X_{n_1}&=\sum_{m=2}^{f(n_1)}g(m)\sum_{i=n_1+1}^{n_2}f_{m,i}
+\sum_{m=f(n_1)+1}^{f(n_2)}g(m)\sum_{i=1}^{n_2}f_{m,i}\\
&=S_1+S_2,
\end{align*}
so that by Lemmas \ref{varest2} and \ref{varest3} we obtain
\begin{align*}
\var(X_{n_2}-X_{n_1})&=\var(S_1)+2\cov(S_1,S_2)+\var(S_2)\\
&=O\left(\left(\sum_{m=2}^{f(n_1)}g(m)\right)E(S_1)\right)\\
&\quad+O\left(\left(\sum_{m=f(n_1)+1}^{f(n_2)}g(m)\right)E(S_1)\right)\\
&\quad+O\left(\left(\sum_{m=f(n_1)+1}^{f(n_2)}g(m)\right)E(S_2)\right)\\
&\ll\left(\sum_{m=2}^{f(n_2)}g(m)\right)E(X_{n_2}-X_{n_1}).
\end{align*}
Now we have that
\begin{align}
&\mu\left\{\max_{n_i<n<n_{i+1}}\left|(X_n-X_{n_i})-E(X_n-X_{n_i})\right|>\epsilon (n_i)\right\}\label{gapsum1}\\
&\qquad\le\sum_{n=n_i+1}^{n_{i+1}}\mu\left\{\left|(X_n-X_{n_i})-E(X_n-X_{n_i})\right|>\epsilon (n_i)\right\}\nonumber\\
&\qquad\ll\sum_{n=n_i+1}^{n_{i+1}}\frac{\left(\sum_{m=2}^{f(n)}g(m)\right)E(X_n-X_{n_i})}{\epsilon (n_i)^2}\nonumber\\
&\qquad\le G_f(i)\sum_{n=n_i+1}^{n_{i+1}}\frac{E(X_n-X_{n_i})}{E(X_{n_i})^{3/2}(\log E(X_{n_i}))^{1+2\epsilon}}.\nonumber
\end{align}
Now using (\ref{Efmnsum2}) together with hypothesis (\ref{assump1}) we find that
\begin{align*}
E(X_n)&=\frac{n}{\log 2}\sum_{m=2}^{f(n)}g(m)\log\left(1+\frac{1}{m}\right)+O(1)\\
&=\frac{n}{\log 2}\sum_{m=2}^{\infty}g(m)\log\left(1+\frac{1}{m}\right)\left(1+o(1)\right),
\end{align*}
which gives us
\begin{align*}
(\ref{gapsum1})&\ll G_f(i)\sum_{n=n_i+1}^{n_{i+1}}\frac{n-n_i}{n_i^{3/2}(\log n_i)^{1+2\epsilon}}\\
&\le\frac{G_f(i)(n_{i+1}-n_i)^2}{n_i^{3/2}(\log n_i)^{1+2\epsilon}}\\
&\ll\frac{G_f(i)}{i(\log i)^{1+2\epsilon}}.
\end{align*}
In view of hypothesis (\ref{assump2}) we now have that
\begin{align*}
\sum_{i=1}^{\infty}\mu\left\{\max_{n_i<n<n_{i+1}}\left|(X_n-X_{n_i})-E(X_n-X_{n_i})\right|>\epsilon (n_i)\right\}<\infty
\end{align*}
which together with another application of Borel-Cantelli finishes the proof.
\end{proof}

Next we proceed to the proof of Theorem \ref{MQestimate1}. In our
proof we will use the fact that the partial quotients $a_n$ of
almost every $x\in\R/\Z$ are only finitely often greater than
$n(\log n)^{1+\delta}$. However to obtain the error term we have
reported we will need to use the following refinement of this
fact, which was noticed and proved in \cite{diamond1986}.
\begin{lemma}\cite[Lemma 2]{diamond1986}\label{anmaxlem}
Let $\delta>0$ and for $M\in\N$ set $M'=M(\log M)^{1/2+\delta}$.
For almost all $x\in\R/\Z$ there exist at most finitely many
positive integers $M$ for which the inequalities
\begin{equation*}
a_m>M',\qquad a_n>M'
\end{equation*}
hold for two distinct indices $m,n\le M$.
\end{lemma}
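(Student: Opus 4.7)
The plan is to prove this via the convergence part of the Borel-Cantelli lemma, applied to a dyadic subsequence. For each $M\ge 2$ let $A_M\subseteq\R/\Z$ denote the event that there exist distinct indices $m,n\le M$ with $a_m(x)>M'$ and $a_n(x)>M'$, where $M'=M(\log M)^{1/2+\delta}$. The goal is to show $\mu(A_M\text{ infinitely often})=0$.

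First I would establish the uniform single-variable tail bound
\[\mu\{a_n(x)>k\}\ll \frac{1}{k},\]
which follows from Theorem \ref{anprobdist} by integrating the density $(\log 2)^{-1}(1+x)^{-1}+O(q^n)$ over $[0,1/(k+1))$. Combining this with Theorem \ref{weakdepthm} and summing the joint distribution over $r,s>M'$ gives, for any pair of distinct indices $m\ne n$,
\[\mu\{a_m>M'\text{ and }a_n>M'\}=\mu\{a_m>M'\}\,\mu\{a_n>M'\}\bigl(1+O(q^{|m-n|})\bigr)\ll \frac{1}{(M')^2}.\]
A union bound over the at most $M^2$ such pairs in $\{1,\ldots,M\}$ then produces
\[\mu(A_M)\ll\frac{M^2}{(M')^2}=\frac{1}{(\log M)^{1+2\delta}}.\]

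This estimate just fails to be summable in $M$, so I cannot apply Borel-Cantelli directly. Instead I would thin to the dyadic scale $M_k=2^k$ and introduce the larger event $\widetilde A_k$ that there exist distinct $m,n\le M_{k+1}$ with both $a_m,a_n>M_k(\log M_k)^{1/2+\delta}$. The same union-bound calculation applied at this scale yields
\[\mu(\widetilde A_k)\ll\frac{M_{k+1}^2}{\bigl(M_k(\log M_k)^{1/2+\delta}\bigr)^2}=\frac{4}{(\log M_k)^{1+2\delta}}\ll\frac{1}{k^{1+2\delta}},\]
which \emph{is} summable since $\delta>0$. By the convergence Borel-Cantelli lemma, almost every $x$ lies in only finitely many of the sets $\widetilde A_k$.

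The proof then concludes by observing the inclusion $A_M\subseteq\widetilde A_k$ whenever $M_k\le M<M_{k+1}$: the witnessing indices $m,n$ lie below $M_{k+1}$, and monotonicity of $t(\log t)^{1/2+\delta}$ gives $a_m,a_n>M(\log M)^{1/2+\delta}\ge M_k(\log M_k)^{1/2+\delta}$, which is precisely the condition defining $\widetilde A_k$. Hence $A_M$ holding for infinitely many $M$ would force $\widetilde A_k$ to hold for infinitely many $k$, a contradiction. The main subtlety is recognizing that the raw probability estimate $\mu(A_M)\ll(\log M)^{-(1+2\delta)}$ is not summable along $\N$ but becomes summable along any geometric progression, provided the threshold $M'$ is only lowered by a bounded multiplicative factor in passing from $M_{k+1}$ down to $M_k$; the specific form $M'=M(\log M)^{1/2+\delta}$ is exactly compatible with this dyadic trick.
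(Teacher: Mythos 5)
Your argument is correct, but there is nothing in the paper to compare it against: the paper does not prove this lemma, it simply quotes it as Lemma 2 of Diamond and Vaaler. What you have produced is a self-contained derivation from the two inputs the paper does state, namely Theorem \ref{anprobdist} (giving the uniform tail bound $\mu\{a_n>k\}\ll 1/k$) and Theorem \ref{weakdepthm} (giving quasi-independence of pairs, hence $\mu\{a_m>M',\,a_n>M'\}\ll (M')^{-2}$ for $m\ne n$), followed by the union bound $\mu(A_M)\ll(\log M)^{-1-2\delta}$, dyadic thinning to restore summability, Borel--Cantelli, and the monotonicity of $t(\log t)^{1/2+\delta}$ to pass from the dyadic subsequence back to all $M$. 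This is essentially the standard second-moment/Borel--Cantelli argument of the original source, so nothing is lost relative to the citation; the only point that deserves explicit mention is that when you sum Theorem \ref{weakdepthm} over all $r,s>M'$ you need the implied constant in the factor $1+O(q^{k})$ to be uniform in $r$ and $s$. That uniformity is exactly what the paper itself uses in deriving (\ref{fmnind1}), so your reading is consistent with the paper's, and with it every step of your proof goes through: the pair count is at most $M_{k+1}^2$, the per-pair bound is $\ll\bigl(M_k(\log M_k)^{1/2+\delta}\bigr)^{-2}$, the resulting bound $\ll k^{-1-2\delta}$ is summable precisely because $\delta>0$, and the inclusion $A_M\subseteq\widetilde A_k$ for $M_k\le M<M_{k+1}$ is valid since $t\mapsto t(\log t)^{1/2+\delta}$ is nondecreasing for $t\ge 1$.
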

\begin{proof}[Proof of Theorem \ref{MQestimate1}]
For much of the proof allow us to simplify the equations involved
by suppressing the dependence of $N(Q,x)$ upon $Q$ and $x$.

From equation (\ref{MQeqn2}) we have for irrational $x$ that
\begin{align}
M_Q(x)&=\sum_{n=1}^{N-1}\sum_{\beta\in
E_n}c(\beta)+\sum_{\substack{\beta\in E_N\\ h(\beta)\le
Q}}c(\beta)\nonumber\\
&=\sum_{n=1}^{N-1}\sum_{m=1}^{a_n}
c\left(\frac{mp_{n-1}+p_{n-2}}{mq_{n-1}+q_{n-2}}\right)
+\sum_{m=1}^{a(Q,x)}c\left(\frac{mp_{N-1}+p_{N-2}}{mq_{N-1}+q_{N-2}}\right).\label{MQeqn3}
\end{align}
Now for irrational $x$ and for positive integers $n$ and
$1\le m\le a_n$ there are two possibilities for the continued
fraction expansion of the fraction
\[\beta=\frac{mp_{n-1}+p_{n-2}}{mq_{n-1}+q_{n-2}}.\]
If $m\ge 2$ then the unique continued fraction expansion of $\beta$,
as defined in the introduction, is given by
\[\beta=[a_0;a_1,\ldots ,a_{n-1},m],\]
while if $m=1$ we have \[\beta=[a_0;a_1,\ldots ,a_{n-1}+1].\] With
these facts, equation (\ref{MQeqn3}), and the definition of the
constants $c(\beta)$ we have for irrational $x$ that
\begin{align}
M_Q(x)&=\sum_{n=1}^N\left(g(a_{n-1}+1)+\sum_{m=2}^{a_n}g(m)\right)-\sum_{m=a(Q,x)+1}^{a_N}g(m)\nonumber\\
&=g(1)+\sum_{n=1}^{N-1}\sum_{m=2}^{a_n+1}g(m)+\sum_{m=2}^{a(Q,x)}g(m).\nonumber
\end{align}
For fixed $Q$ and $x$ let
\begin{align*}
a_n'(x)=\begin{cases}a_n(x)+1 &\text{ if } 1\le n< N(Q,x), \text{ and}\\a(Q,x)&\text{ if } n=N(Q,x),\end{cases}
\end{align*}
and define
\begin{align*}
M_1(Q,x)&=\max_{1\le n\le N(Q,x)}a_n'(x).
\end{align*}
Then let $n_1=n_1(Q,x)$ be an integer in $\{1,\ldots ,N\}$ with $a_{n_1}'(x)=M_1$ and define
\begin{align*}
M_2(Q,x)&=\max_{\substack{1\le n\le N(Q,x)\\n\not=n_1}}a_n'(x).
\end{align*}
Now for a fixed $x\in\R/\Z$ if $M_1=a(Q,x)$ then we have that
\begin{align*}
M_Q(x)=g(1)+\sum_{m=1}^{M_2}g(m)\sum_{n=2}^{N-1}f_{m,n}(x)+\sum_{m=1}^{M_1}g(m),
\end{align*}
otherwise we have that
\begin{align*}
M_Q(x)=g(1)+\sum_{m=1}^{M_1}g(m)\sum_{n=2}^{N-1}f_{m,n}(x)+\sum_{m=1}^{a(Q,x)}g(m).
\end{align*}
In either case it is clear that
\begin{align}
M_Q(x)-\left(\sum_{m=1}^{M_2}g(m)\sum_{n=2}^{N-1}f_{m,n}(x)\right)\le
g(1)+\sum_{m=1}^{M_1}g(m).\label{MQeqn4}
\end{align}
Now letting
\begin{equation*}
S_Q(x)=\sum_{m=1}^{M_2}g(m)\sum_{n=2}^{N-1}f_{m,n}(x)
\end{equation*}
we may apply Lemma \ref{anmaxlem} to see that for almost every $x\in\R/\Z$ we can choose $Q_0$ large enough that whenever $Q>Q_0$ we have
\begin{align*}
S_Q(x)=\sum_{m=2}^{N(\log N)^{1/2+\delta}}g(m)\sum_{n=1}^{N-1}f_{m,n}(x).
\end{align*}
In other words with $f(N)=\lfloor N(\log N)^{1/2+\delta}\rfloor$ we have that
\begin{equation*}
\lim_{Q\rar\infty}(S_Q(x)-X_{N,f}(x))=0
\end{equation*}
almost everywhere. Now by applying Lemmas \ref{NQestimate1} and \ref{aelemma1}, for any $\epsilon>0$ and for almost every $x\in\R/\Z$ we have as $Q\rar\infty$ that
\begin{align*}
S_Q(x)&=\frac{N(Q,x)}{\log 2}\sum_{m=2}^{f(N)}g(m)\log\left(1+\frac{1}{m}\right)\\
&\qquad+O_\epsilon\left(\left(\sum_{m=1}^{f(N)}g(m)\right)^{1/2}N(Q,x)^{3/4}(\log N(Q,x))^{1/2+\epsilon}\right)\\
&=\frac{12}{\pi^2}\left(\sum_{m=2}^{f(N)}g(m)\log\left(1+\frac{1}{m}\right)\right)\log Q\\
&\qquad+O_\epsilon\left((\log Q)^{1/2}(\log\log Q)^{3/2+\epsilon}\right)\\
&\qquad+O_\epsilon\left(\left(\sum_{m=1}^{f(N)}g(m)\right)^{1/2}(\log Q)^{3/4}(\log\log Q)^{1/2+\epsilon}\right)\\
&=\frac{12}{\pi^2}\left(\sum_{m=2}^{f(N)}g(m)\log\left(1+\frac{1}{m}\right)\right)\log Q\\
&\qquad+O_\epsilon\left(\left(\sum_{m=1}^{f(N)}g(m)\right)^{1/2}(\log Q)^{3/4}(\log\log Q)^{1/2+\epsilon}\right).
\end{align*}
Finally to establish an upper bound for the sums on the right hand side of (\ref{MQeqn4}) we appeal to the fact that for any $\delta>0$ the set
\[\{x\in\R/\Z : a_n(x)>n(\log n)^{1+\delta}\text{ for infinitely many }n\}\]
is a set of measure zero (see \cite[Theorem V.2.2]{rockett1992}). Thus
\begin{align*}
\sum_{m=1}^{M_1}g(m)\le\sum_{m=1}^{N(\log N)^{1+\delta}}g(m)
\end{align*}
almost everywhere as $Q\rar\infty$.
\end{proof}
Finally we come to the proof of our main result.
\begin{proof}[Proof of Theorem \ref{MQestimate2}]
The function $g$ obviously satisfies (\ref{ghypoth1}), so we will
begin by checking that it satisfies (\ref{ghypoth2}). Note that
\begin{equation*}
\frac{\sum_{m=1}^{f((n+1)^2)}g(m)}{\sum_{m=1}^{f(n^2)}g(m)}
=1+\frac{\sum_{m=f(n^2)+1}^{f((n+1)^2)}g(m)}{\sum_{m=1}^{f(n^2)}g(m)},
\end{equation*}
and also that
\begin{align*}
\sum_{m=f(n^2)+1}^{f((n+1)^2)}g(m)&\ll\sum_{m=f(n^2)+1}^{f((n+1)^2)}\frac{1}{m^{1/2+\gamma}}\\
&\ll f((n+1)^2)^{1/2-\gamma}-f(n^2)^{1/2-\gamma}\\
&\ll 2^{(1+\delta)(1/2-\gamma)}\left((n+1)^{1-2\gamma}(\log(n+1))^{(1/2+\delta)(1/2-\gamma)}\right.\\
&\qquad\qquad\qquad\qquad\qquad\qquad \left.-n^{1-2\gamma}(\log n)^{(1/2+\delta)(1/2-\gamma)}\right)\\
&\ll (\log n)^{(1/2+\delta)(1/2-\gamma)}\left((n+1)^{1-2\gamma}-n^{1-2\gamma}\right)\\
&=(\log n)^{(1/2+\delta)(1/2-\gamma)}\left(n^{1-2\gamma}(1+1/n)^{1-2\gamma}-n^{1-2\gamma}\right)\\
&\ll \frac{(\log n)^{(1/2+\delta)(1/2-\gamma)}}{n^{2\gamma}},
\end{align*}
where the last inequality comes from the binomial theorem. Thus
it is clear that
\begin{equation*}
\lim_{n\rar\infty}\frac{\sum_{m=1}^{f((n+1)^2)}g(m)}{\sum_{m=1}^{f(n^2)}g(m)}=1,
\end{equation*}
and the conclusion of Theorem \ref{MQestimate1} may be applied. The error terms (\ref{MQest1err1}) and (\ref{MQest1err2}) can be estimated by
\begin{align*}
\sum_{m=1}^{f(N)}g(m)\ll f(N)^{1/2-\gamma}&\ll(\log Q)^{1/2-\gamma}(\log\log Q)^{(1/2+\delta)(1/2-\gamma)},\text{ and}\\
\sum_{m=1}^{N(\log N)^{1+\delta}}g(m)\ll f(N)^{1/2-\gamma}&\ll(\log Q)^{1/2-\gamma}(\log\log Q)^{(1+\delta)(1/2-\gamma)},
\end{align*}
and the sum in (\ref{MQest1main}) may be extended to infinity, introducing an error of at most a constant times
\begin{align*}
&\left(\sum_{m=f(N)+1}^\infty
g(m)\log\left(1+\frac{1}{m}\right)\right)\log Q
\ll f(N)^{-(1/2+\gamma)}\log Q\\
&\qquad\qquad\qquad\ll (\log Q)^{1/2-\gamma}(\log\log Q)^{(1/2+\delta)(-1/2-\gamma)}.
\end{align*}
Since any value of $\delta>0$ is allowed we may choose
$\delta=\gamma$ to achieve the bounds in the statement of the
theorem.
\end{proof}


\end{document}